\newcommand{\e} {\varepsilon}
\newcommand{\C} {\mathbb{C}}
\newcommand{\D} {\mathbb{D}}
\newcommand{\N} {\mathbb{N}}
\newcommand{\R} {\mathbb{R}}
\newcommand{\GL}{\operatorname{GL}}
\newcommand{\p} {\mathbf{P}}
\newcommand{\NN}{\mathbb{N}}
\newcommand{\PP}{\mathbf{P}}
\newcommand{\ee}{{\bf \operatorname{e}}}
\newcommand{\FG}{\mathscr{F}_{n,m}(\p)}
\newtheorem{lemma}{Lemma}
\newtheorem{theorem}{Theorem}
\newtheorem{proposition}{Proposition} 
\newtheorem{remark}{Remark}
\newtheorem{example}{Example}
\newenvironment{proof}{\textbf{Proof.}}{\hfill $\Box$}
\journal{Journal of \LaTeX\ Templates}
\begin{document}

\begin{frontmatter}

\title{Feedback equivalence and uniform ensemble reachability}
\tnotetext[mytitlenote]{This work was supported by the German Research Foundation (DFG) under grant SCHO 1780/1-1.}

% %% Group authors per affiliation:
% \author{Felix Endres}%\fnref{myfootnote}}
% \ead{felix@. }

\author{Michael Sch\"onlein\corref{mycorrespondingauthor}}%\fnref{myfootnote}}
\address{Institute for Mathematics, University of W\"urzburg, Germany}
\cortext[mycorrespondingauthor]{Corresponding author}

\ead{schoenlein@mathematik.uni-wuerzburg.de}
%\fntext[myfootnote]{Since 1880.}

%% or include affiliations in footnotes:
% \author[mymainaddress,mysecondaryaddress]{}
% \ead[url]{www.elsevier.com}

% \author[mysecondaryaddress]{Global Customer Service\corref{mycorrespondingauthor}}
% \cortext[mycorrespondingauthor]{Corresponding author}
% \ead{support@elsevier.com}

% \address[mymainaddress]{Institute for Mathematics, University of W\"urzburg, Germany}
% \address[mysecondaryaddress]{360 Park Avenue South, New York}

\begin{abstract}
This paper considers feedback methods for ensemble reachability of parameter-dependent linear systems $(A(\theta),B(\theta))$, where the parameter $\theta$ is varying over a compact Jordan arc in the complex plane. Recently, pointwise testable sufficient conditions for uniform ensemble reachability have been developed. Beside the necessity of pointwise reachablility these conditions put restrictions on the spectra of the matrices $A(\theta)$ and the Hermite indices of the pair $(A(\theta),B(\theta))$. In this paper we show that these conditions can be ensured by applying a suitable feedback transformation if the pair $(A(\theta),B(\theta))$ is pointwise reachable and it Kronecker indices are independent from the parameter. 
\end{abstract}

\begin{keyword}
Parameter-dependent linear systems \sep ensemble reachability \sep canonical from \sep  Kronecker indices    \sep Hermite indices \sep  Feedback equivalence
\MSC[2010] 93B05 \sep  93B52 \sep 93B55
\end{keyword}

\end{frontmatter}

%\linenumbers

%%%%%%%%%%%%%%%%%%%%%%%%%%%%%%%%%%%%%%%%%%%%%%%%%%%%%%%%%%%%%%%%%%%%%%%%%%%%%%%%%%%
\section{Introduction}
%%%%%%%%%%%%%%%%%%%%%%%%%%%%%%%%%%%%%%%%%%%%%%%%%%%%%%%%%%%%%%%%%%%%%%%%%%%%%%%%%%%\cite{Baragana_LAA_2009,Baragana_LAA_2012,Baragana_LAA_2013}

An emerging field in mathematical systems and control theory  refers to the task of controlling a large, potentially infinite, number of states, or systems, using a single input function or a single feedback controller. Main goals of a control theory for such problems is to establish fundamental system theoretic methods and results in the context, i.e., to prove the existence of parameter independent open-loop and closed-loop controllers, develop methods for constructing them and tackle relevant system theoretic tasks.  This is a huge area and the term {\it ensemble control} has been established to refer to this {area of research, cf.~\cite[Section~2.4]{brockett2012notes}.

Ensembles arise in statistical approaches to linear systems, where the states are replaced by probability density functions. The design of controllers that morph one probability density function into another one then become control problems of the Fokker-Planck equation or the Liouville transport equations, cf. \cite{aborzi,brockett2012notes,chen2017optimal,fleig2016estimates,shen2017discrete,zeng_2016_observation,zeng2016ensemble}.

Ensemble control also embraces the situation of uncertainties in the model parameters. In this cases the task  is to control a parameter-dependent system with a single or a few open-loop inputs which are independent from the unknown 
model parameters \cite{li2009ensemble}. Recently, there has been much interest in motion control problems for spatio-temporal systems and infinite platoons of vehicles~\cite{bamieh2002distributed,curtain_tac_2015,melzer1971optimal,rogge2008vehicle}. Using Fourier-transform techniques, such control systems can be identified with parameter-dependent families of linear systems, cf.~\cite{bamieh2002distributed,Curtain2009}.

Besides, this topic is not entirely new and other terms then ensemble control are also present in the literature such as 
{\it simultaneous control} or {\it control of families of systems}, cf. \cite{buscain_2012,ghosh2000sufficient,hautussontag1986,loheac:hal-01164525,sontagwang1990}. Other closely related topics are robust control \cite{amato2006}, and the blending problem, as considered in \cite{Tannenbaum1981invariance}.
Sontag (together with Hautus and Wang) considered stabilization and pole-shifting for parameterized families of systems, cf. \cite{hautussontag1986,sontag_intro_families,sontagwang1990}. Besides, the series of papers by Ghosh \cite{ghosh1986_partial_pp,ghosh1985,ghosh1986_I,ghosh1988_II} investigates the possibility of simultaneously solving classical control problems for finitely many systems.

For recent contributions to the controllability problem for parameter-dependent systems we refer to  \cite{Agrachev_ensemble_lie_2016,agrachev2020control}, \cite{chen2019mcss}, \cite{li2011,li2009ensemble,li_tac_2016} and  \cite{Zeng_scl_2016}. Agrachev and Sarychev consider ensemble controllability for nonlinear drift-free parameter-dependent systems and provide a characterization in terms of Lie-brackets. In the same direction, the work of Chen \cite{chen2019mcss} also treats nonlinear systems and considers Lie extensions. We note that these approaches do not apply to the setting in this thesis. In \cite{li2011} a characterization for ensemble controllability for time-varying parameter-dependent linear systems is presented, which is based on the singular value decomposition of the reachability operator.

A recent observation from the control of probability densities is that the range of achievable tasks becomes much wider by using a mixture of open-loop and feedback controllers. In \cite{brockett2012notes} and \cite{DIRR20161018} it is shown that with pure open-loop controls only the mean value can be controlled, whereas by using additionally a feedback controller it is possible to control the mean value as well as the variance of the probability distribution. In the context of parameter-dependent systems and ensemble reachability, however, feedback methods have not been addressed so far. This paper devoted to this problem. In our analysis embraces the Hermite and the Kronecker indices. We note that in a series of papers Barag\~{a}na, Zaballa and co-workers tackled the relation between the Hermite indices and the controllability indices as well as the their behavior under perturbation and feedback, cf. \cite{Bar-Fer-Zab-00,Bar-Fer-Zab-05,Zab-97}.

The {\it organization} of the paper is as follows. In Section~\ref{sec:Not_defs} we introduce the class of systems under consideration. It also provides the definition of uniform ensemble reachability and recalls a known set of sufficient conditions that prepare the ground for the analysis in this paper. In this section we recall the definition of the Kronecker and Hermite indices and adapt  the notion of feedback equivalence to parameter-dependent systems. In Section~\ref{sec:restricted_equiv} we show that a pair of parameter-dependent matrices is restricted feedback equivalent to the Brunovsky from if its Kronecker indices do not depend on the parameter.
  %
%
% In the first section we adjust the notion of feedback equivalence to present context of parameter-dependent linear systems and recap the Kronecker indices as well as an Brunovsky-like canonical form. For this adaption we shall show that  a pair $(A,B) \in  C_{n,n}(\p) \times C_{n,m}(\p)$ is restricted feedback equivalent to an Brunovsky-like canonical form provided the Kronecker indices are constant.
%
%
Section~\ref{sec:main_results} contains the main results of the paper. That is, we show that for single-input pairs $(A,b)$ there is a continuous parameter-dependent feedback $f$ such that $(A+bf,b)$ is uniformly ensemble reachable if the pair is pointwise reachable. In this section we also show that in the for multi-input pairs $(A,B)$ there is feedback transformation so that the feedback transformed pair becomes uniformly ensemble reachable if it is pointwise reachable and the Kronecker indices are constant.
In Section~\ref{sec:open-feedback_osci} we investigate the controlled harmonic oscillator in light of the present context. For this example, we tackle the harder problem of deriving  a parameter-independent feedback. Moreover, under the additional assumption that the desired terminal states satisfy a Lipschitz condition, we also provide an estimate how the feedback gain influences the approximation of the terminal states.

%%%%%%%%%%%%%%%%%%%%%%%%%%%%%%%%%%%%%%%%%%%%%%%%%%%%%%%%%%%%%%%%%%%%%%%%%%%%%%%%%%%

%%%%%%%%%%%%%%%%%%%%%%%%%%%%%%%%%%%%%%%%%%%%%%%%%%%%%%%%%%%%%%%%%%%%%%%%%%%%%%%%%%%
\section{Problem statement, notation and known results}\label{sec:Not_defs}
%%%%%%%%%%%%%%%%%%%%%%%%%%%%%%%%%%%%%%%%%%%%%%%%%%%%%%%%%%%%%%%%%%%%%%%%%%%%%%%%%%%

In this paper we consider the reachability properties of parameter-dependent linear systems with the additional constraint that the open-loop control input is independent of the parameter. To investigate continuous-time systems 
\begin{align} \label{eq:cont-ensemble}
\tfrac { \partial } { \partial t } x ( t ,  \theta  ) &= A (  \theta ) x ( t ,  \theta  ) + B (  \theta  ) u ( t ) 
\end{align}
and discrete-time systems
\begin{align} \label{eq:disc-ensemble}
x _{ t + 1 }( \theta ) &= A ( \theta ) x _ t ( \theta ) + B ( \theta ) u_ t 
\end{align}
along the same lines the initial condition is in both cases $x(0,\theta)=x_0(\theta)=0$ for each parameter $\theta \in \p$. The parameter space $\p$ is assumed to be a Jordan arc in the complex plane, i.e. $\p$ is the image of a continuous and bijective function defined on a compact interval. The matrix-valued functions $A: \PP \mapsto \C^{n \times n}$ and $B: \PP \mapsto \C^{n \times m}$ are assumed to be continuous and we will use the short notation  $(A,B) \in C_{n,n}(\mathbf{ P })\times C_{n,m}(\mathbf{ P })$ to express this. Moreover, we denote by $C_n(\p)$ the space of continuous functions from $\p$ to $\C^n$. The time domain is $\NN_{0}$ in the discrete-time case or $[0,\infty)$ in the continuous-time case, and the inputs  are $u \in L^1_{\text{loc}}\left([0,\infty), \C^{m} \right)$ or $u = (u_{0},u_{1}, \dots), u_{i} \in \C^{m}$, respectively. Let 
\begin{align*}
\varphi ( T , 0, u ) (\theta) = \int _ { 0 } ^ { T } \mathrm { e } ^ { A ( \theta ) ( T - s ) } B ( \theta ) u ( s ) \mathrm { d } s
\end{align*}
 and
\begin{align*}
\varphi ( T , 0, u ) (\theta)  = \sum _ { k = 0 } ^ { T - 1 } A ( \theta ) ^ { k } B ( \theta ) u _ { T - 1 - k }
\end{align*}
denote the solutions of \eqref{eq:cont-ensemble} and \eqref{eq:disc-ensemble}, respectively.

The central notion of reachability that will be considered in this paper is as follows. A pair $(A,B)$  is called \emph{uniformly ensemble reachable (from zero)}, if for any $f \in C_n(\mathbf{ P })$ and any  $\varepsilon > 0$  there exist $T > 0$ and $u \in L^{1}_m ( [0,T] )$ or $ u = (u_{0}, u_{1}, \dots ,u_{T-1} ), \ u_{i} \in   \C^{m}$ such that 
\begin{align*}
\|\varphi (T, 0 , u)-f\|_{\infty}= \sup_{\theta \in \mathbf{ P }}	\|\varphi (T, 0 , u) (\theta)- f (\theta) \| < \varepsilon .
\end{align*}

We note that the notion ensemble reachability coincides with approximate reachability for the infinite-dimensional system, cf. \cite{JDE_ensembles_2021}. Also, we note that in continuous-time ensemble reachability is independent from the final time, i.e. if it holds for some $T>0$ is also holds for every $T>0$, cf. \cite{engelnagel,triggiani75}. Thus, for continuous-time systems ensemble reachability is equivalent to complete ensemble reachability, i.e. for every $x_0,f \in C_n(\p)$, for  every $\e>0$ and for every $T>0$  there is an input $u \in L^{1}_m ( [0,T] )$ such that $
\|\varphi (T, x_0 , u)-f\|_{\infty}<\e$.
Also we recall that exact ensemble reachability (i.e. $\e=0$) is never possible if the parameter space is infinite. In contrast, if $\p=\{ \theta_1,...,\theta_N\}$ is finite, ensemble reachability boils down to the classical finite-dimensional reachability of the corresponding parallel connection defined by the pair
 \begin{align*}
\begin{pmatrix} A(\theta_1)& & \\ & \ddots & \\ & & A(\theta_N) \end{pmatrix} , \quad \begin{pmatrix} B(\theta_1) \\ \vdots\\ B(\theta_N) \end{pmatrix} 
 \end{align*}

{\it Problem statement}: In this paper we explore how the application of feedback methods enlarges the class of the parameter-dependent linear systems that are uniformly ensemble reachable. We consider inputs of the form
\begin{align*}
u(t,x)=  F(\theta)x(t,\theta)+ u(t), \quad F\in C_{n,m}(\p).
\end{align*}
and aim at conditions on $(A,B) \in C_{n,n}(\p) \times C_{n,m}(\p)$ guaranteeing the existence of a feedback $F\in C_{n,m}(\p)$ and an open-loop input $u$ such that the mixed open-loop and feedback controlled pair $(A+BF,B)$ is uniformly ensemble reachable. 
 
% 
% The focus of this paper is on feedback methods applied to the parameter-dependent linear systems \eqref{eq:cont-ensemble} and \eqref{eq:disc-ensemble}. More precisly, we aim at conditions so that feedback methods can be applied to the parameter-dependent systems such that the closed-loop system will be uniformly ensemble reachable. 
% 

In recent years, some effort has been spent to derive necessary and sufficient conditions for uniform ensemble reachability that are verifiable just in terms of the matrices $A(\theta)$ and $B(\theta)$, cf. \cite{JDE_ensembles_2021,li_tac_2016}. Exemplary, we recall the following set of sufficient conditions, cf. \cite[Corollary~4]{JDE_ensembles_2021}. 

In the case that $\mathbf{ P }$ is a Jordan arc a pair $(A,B)$ is uniformly ensemble reachable if the following conditions are satisfied:
\begin{enumerate}
\item[(N1)] $( A ( \theta ) , B ( \theta ) )$ is reachable for all $\theta \in \mathbf{ P }$.
\item[(N2)] For all distinct parameters $\theta, \theta' \in \mathbf{ P }$,  the spectra  $\sigma \big( A(\theta)\big)$ and $\sigma \big( A(\theta') \big)$ are disjoint.
\item[(S)] For each $\theta \in \mathbf{ P }$, the eigenvalues of $A(\theta )$ are simple.
\item[(H)] The Hermite indices $h_{1} (\theta), \dots , h_{m} (\theta) $ of $( A ( \theta ) , B ( \theta ) )$ are independent of $\theta \in \mathbf{ P }$.
\end{enumerate}

Before we recall the definition of the Hermite indices,  some comments are in order. Condition (N1) is also necessary for uniform ensemble reachability. Moreover, for single-input systems condition (N2) is also necessary for uniform ensemble reachability. 
The label for condition (S2) is chosen such that the notation is consistent with the labeling in \cite{MCSS_poly}.

Next we recall relevant lists of indices from finite-dimensional linear control theory, cf. \cite{kailath1980}. Let $(A,B) \in \mathbb{C}^{n\times n} \times \mathbb{C}^{n\times m}$ be a reachable pair, i.e.
\begin{align*}
 \operatorname{rank} \begin{pmatrix} B & AB  & \cdots & A^{n-1}B  \end{pmatrix} = n.
\end{align*}
Let $b_i$ denote the $i$th column of $B$. Selecting from left to right the first linear independent vectors
\begin{align}\label{eq:list-kronecker}
b _ { 1 } ,b_2,...,b_m,Ab_1,...,Ab_m, A^2b_1,...,A^2b_m, \ldots , A ^ {n - 1 } b _ { 1 } , \ldots , A ^ { n - 1 } b _ { m }
\end{align}
one obtains a list of basis vectors of the reachability subspace as
\begin{align*}
b _ { 1 } , \ldots , A ^ { \kappa _ { 1 } - 1 } b _ { 1 } , \ldots , b _ { m } , \ldots , A ^ { \kappa _ { m } - 1 } b _ { m }.
\end{align*}
The integers $\kappa(A,B)=(\kappa_{1},\dots,\kappa_{m})$ are called the \emph{Kronecker indices} of $(A,B)$, where $\kappa_{i} := 0$ if the vector $b_{i}$ has not been selected. 
% Also, recall that a pair $(A,B)$ is reachable if and only if
% \begin{align*}
% \sum_{j=1}^{m} k_{j}  = n.
% \end{align*}
Associated to the Kronecker indices $\kappa=(\kappa_1,...,\kappa_m)$ we define the pair $(A_{\kappa},B_{\kappa})$  given by
\begin{align}\label{eq:Brunovsky_restricted}
 A_{\kappa} =
 \begin{pmatrix}
 { A _ {\kappa_1}  } & { 0 } & { \cdots } & { 0 } \\ 
		{ 0 } & { A _ {  {\kappa_2 } } } & { \cdots } & { 0 } \\
		{ \vdots } & { \vdots } & { \ddots } & { \vdots }  \\
		{ 0 } & { 0 } & { \cdots } & { A _ { \kappa_{ m } } }
 \end{pmatrix}\quad \text{ and } \quad
 B _ {\kappa} = 
		\begin{pmatrix}
		{ b _ { \kappa_{ 1 } } } & { 0 } & { \cdots } & { 0 }  \\ 
		{ 0 } & { b _ { \kappa_{ 2 } } } & { \cdots } & { 0 }  \\
		{ \vdots } & { \vdots } & { \ddots } & { \vdots }  \\
		{ 0 } & { 0 } & { \cdots } & { b _ { \kappa_{ m } } }  
		\end{pmatrix},
\end{align}
where for $\kappa_i \geq 1$ the blocks $ A _ { \kappa_i }$ have size $k_ { i } \times k_ { i }$ and the $b _ { \kappa_i } $ are columns of size $k_ { i }$ and have the following form
\begin{align*}
A _ { \kappa_{ i } }= 
\left( 
\begin{array} { c c c c c } { 0 } & { 0 } & { 0 } & { \dots } & { 0 } \\ { 1 } & { 0 } & { 0 } & { \dots } & { 0 } \\ { \vdots } & { \ddots } & { \ddots } & { \ddots } & { \vdots } \\ { 0 } & { \ddots } & { 1 } & { 0 } & { 0 } \\ { 0 } & { 0 }  & { \ldots } & 1 & { 0 } \end{array} \right), \quad 
b_{k _ { i }} = 
\left( \begin{array} { c } { 1 } \\ { 0 } \\ { \vdots } \\ { 0 } \\ { 0 } 
\end{array} 
\right)
\end{align*}
and if $\kappa_i=0$ the block $A_{\kappa_i}$ is absent and $b_{\kappa_i}=0$.

% Given a pair $(A,B) \in \mathbb{\R}^{n\times n} \times \mathbb{R}^{n\times m}$ we will define lists of indices, which correspond to a specific choices of basis vectors of the reachability subspace. Let $b_i$ denote the $i$th column of $B$. Select from left to right the first linear independent vectors out of
% \begin{align}\label{eq:list-kronecker}
% b _ { 1 } ,b_2,...,b_m,Ab_1,...,Ab_m, A^2b_1,...,A^2b_m, \ldots , A ^ {n - 1 } b _ { 1 } , \ldots , A ^ { n - 1 } b _ { m }.
% \end{align}
% Then one obtains a list of basis vectors of the reachability subspace as
% \begin{align*}
% b _ { 1 } , \ldots , A ^ { k _ { 1 } - 1 } b _ { 1 } , \ldots , b _ { m } , \ldots , A ^ { k _ { m } - 1 } b _ { m }.
% \end{align*}
% The integers $k_{1},\dots,k_{m}$ are called the \emph{Kronecker indices}, where $k_{i} := 0$ if the vector $b_{i}$ has not been selected. 

Besides, selecting from left to right the first independent vectors
\begin{align*}
b _ { 1 } ,Ab_1,...,A^{n-1}b_1,b_2,Ab_2,...,A^{n-1}b_2,..., b_m,Ab_m, \ldots , A ^ {n - 1 } b _ {m } 
\end{align*}
one obtains another list of basis vectors of the reachability subspace 
\begin{align*}
b _ { 1 } , \ldots , A ^ { h _ { 1 } - 1 } b _ { 1 } , \ldots , b _ { m } , \ldots , A ^ { h _ { m } - 1 } b _ { m }.
\end{align*}
The integers ${h}(A,B)=(h_{1},\dots,h_{m})$ are called the \emph{Hermite indices}, where $h_{i} := 0$ if the vector $b_{i}$ has not been selected. 
% For the pair $(A,B)$ we denote the list of Hermite indices by 
% \begin{align*}
% {h}(A,B) = (h_{1},\dots,h_{m}).
% \end{align*}
Recall that a pair $(A,B)$ is reachable if and only if
\begin{align*}
 \sum_{j=1}^{m} \kappa_{j} = \sum_{j=1}^{m} h_{j}  = n.
\end{align*}

Furthermore, we briefly adapt the notion of feedback equivalence from finite-dimensional linear systems to the present context of parameter-dependent linear systems. For a more comprehensive exposition, we refer to \cite[Sections~6.1 and~6.3]{fuhrmann2015mathematics}, \cite[Section~5.2]{sontag}.

% Given a pair $(A,B) \in C_{n,n}(\p) \times C_{n,m}(\p)$ we call a triple 
% $$( T , F , S)   \in C_{n,n}(\p) \times C_{m,n}(\p) \times C_{m,m}(\p) $$ such that for every $\theta \in \p$ it holds $ T(\theta) \in \GL _ { n } $ and $S(\theta) \in \mathbf { U } _ { m } $, i.e. $T(\theta)$ is invertible and $U(\theta)$ is upper triangular so that all diagonal entries are $1$. 

A triple $( T , F , S)   \in C_{n,n}(\p) \times C_{m,n}(\p) \times C_{m,m}(\p)$ is called a \emph{restricted feedback transformation} if it satisfies $ T(\theta) \in \GL_{ n }( \mathbb { C } )$ and $S(\theta) \in \mathbf { U }_ { m }$ for every $\theta \in \p$, i.e. $T(\theta)$ is invertible and $U(\theta)$ is upper triangular so that all diagonal entries are $1$. The term restricted refers to the requirement that $S(\theta) \in\mathbf { U }_ { m }$. In contrast, $( T , F , S)   \in C_{n,n}(\p) \times C_{m,n}(\p) \times C_{m,m}(\p)$ would be called a \emph{feedback transformation} if $ T(\theta) \in \GL_ { n }(\C^n)$ and $S(\theta) \in \GL_ { m }( \mathbb { C } )$ for every $\theta \in \p$.

In the following, for $M \in C_{n,m}(\p)$ and $N\in C_{m,p}(\p)$ we will  write $M\, N$ as a short notation for $M(\theta)N(\theta)$ for every $\theta \in \p$. Moreover, the set of a restricted feedback transformations defines the \emph{restricted feedback group} $\mathscr{F}_{n,m}(\p)$, where the composition of two elements $( T_1 , F_1 , S_1) $ and $( T_2 , F_2 , S_2)$  is given by
\begin{align*}
\left( T _ { 1 } , F _ { 1 } , S _ { 1 } \right) \circ \left( T _ { 2 } , F _ { 2 } , S _ { 2 } \right) = \left( T _ { 1 } T _ { 2 } , F _ { 1 } T _ { 2 } + S _ { 1 } F _ { 2 } , S _ { 1 } S _ { 2 } \right).
\end{align*} 
Further, the {neutral element} of the restricted feedback group $\FG$ is $ \left( I _ { n } , 0 _ { n , m } , I _ { m } \right)$
and the {inverse} of an element $(T,F,S)$ is given by $ \left( T ^ { - 1 } , - S ^ { - 1 } F T ^ { - 1 } , S ^ { - 1 } \right)$.
The restricted feedback group $\FG$ acts on a pair $(A,B) \in C_{n,n}(\p) \times C_{n,m}(\p)$ as follows
\begin{align*} 
( A , B )  \quad \mapsto_{(T,F,S)}  \quad  \left( T (  A  -  B S  ^ { - 1 } F ) T ^ { - 1 } , T B S ^ { - 1 } \right)
\end{align*}
For notational convenience we will write
\begin{align*} 
(T,F,S) \cdot ( A , B )   := \left( T (  A  -  B S  ^ { - 1 } F ) T ^ { - 1 } , T B S ^ { - 1 } \right).
\end{align*}

% \begin{definition}
Two pairs $(A_{1},B_{1}),(A_{2},B_{2})$ in $ C_{n,n}(\p) \times C_{n,m}(\p)$ are called  \emph{restricted feedback equivalent on} $\p$, denoted by $\sim_{\p}$, if there exists $(T,F,S) \in \FG$ such that 
\begin{align*}
(A_2,B_2) = (T,F,S) \cdot (A_1,B_1).
\end{align*}
% \end{definition}
%
%
%
%
%
%Two pairs $(A_{1},B_{1}),(A_{2},B_{2})$ 
%%%    NOTE: definition sightly differs from Sontag, HF, but is equivalent!! %%%%%%%%%%%%%%%
%
% (Restricted) feedback equivalence is indeed an equivalence relation on $C_{n,n}(\p) \times C_{n,m}(\p)$. The reflexivity and the symmetry property are consequences of the existence of an identity and inverse element in the (restricted) feedback group. The transitivity follows from the composition rule.  Also, w
Note that $(A_1,B_1) \sim_{\p} (A_2,B_2)$  if and only if there is a restricted feedback transformation $(T,F,S)$ such that
\begin{align}\label{eq:restr_feedback_equiv_FH}
\begin{split}
T A_{1} - A_{2} T  &= B_{2}  F  \\
T B_1  &= B_2  S.  
 \end{split}
\end{align}
Also, we recall that the Kronecker indices $\kappa(A,B)$ are invariant under restricted feedback transformations, cf. \cite[Lemma~6.16]{fuhrmann2015mathematics}, i.e. for all 
$(T,F,S) \in \FG$ it holds
\begin{align*}
\kappa(A,B)(\theta) = \kappa\left( T (  A  +  B  F ) T ^ { - 1 } , T B S ^ { - 1 } \right)(\theta).
\end{align*}
For future use we recap the following well-known result.

\begin{lemma} \label{lem:fundlemma}
Let $T \in C_{n,n}(\p)$ and suppose that $T(\theta)=\big(t_1(\theta),\cdots,t_n(\theta)\big) \in \GL_ { n } ( \mathbb { C } )$ for every $\theta \in \p$.
\begin{enumerate}
 \item[(a)] Then the mapping $\theta \mapsto T(\theta)^{-1}$ is continuous.
 \item[(b)] Let $v\colon \p \to \mathbb{C}^n$ be continuous. Then, the coordinates $\alpha_{1}(\theta),\dots,\alpha_{n}(\theta)$ of $v(\theta)$ with respect to the basis $t_{1}(\theta),\dots, t_{n}(\theta)$ depend continuously on $\theta$. 
\end{enumerate}
\end{lemma}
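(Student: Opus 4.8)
The plan is to reduce both parts to the single observation that matrix inversion is given by a rational formula in the matrix entries, together with the standing hypothesis that $\det T(\theta) \neq 0$ for every $\theta \in \p$.

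For part (a), I would invoke Cramer's rule, which expresses $T(\theta)^{-1} = \tfrac{1}{\det T(\theta)}\, \operatorname{adj} T(\theta)$, where $\operatorname{adj} T(\theta)$ is the adjugate (the transpose of the cofactor matrix). Each entry of $\operatorname{adj} T(\theta)$ is a polynomial in the entries of $T(\theta)$, hence a continuous function of $\theta$ since $T$ is continuous by assumption; likewise $\theta \mapsto \det T(\theta)$ is a polynomial in the entries of $T(\theta)$ and therefore continuous. Because $T(\theta) \in \GL_n(\C)$ for all $\theta \in \p$, this scalar function is nowhere zero, so $\theta \mapsto \tfrac{1}{\det T(\theta)}$ is continuous as well. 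Multiplying the continuous scalar function by the continuous matrix-valued function $\theta \mapsto \operatorname{adj} T(\theta)$ yields that $\theta \mapsto T(\theta)^{-1}$ is continuous, entrywise and hence in any matrix norm.

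For part (b), write $\alpha(\theta) = (\alpha_1(\theta),\dots,\alpha_n(\theta))^\trans$ for the coordinate vector. By definition of the coordinates of $v(\theta)$ with respect to the basis $t_1(\theta),\dots,t_n(\theta)$ we have
\begin{align*}
v(\theta) = \sum_{i=1}^{n} \alpha_i(\theta)\, t_i(\theta) = T(\theta)\, \alpha(\theta),
\end{align*}
and since $T(\theta)$ is invertible this gives $\alpha(\theta) = T(\theta)^{-1} v(\theta)$. The right-hand side is the product of the matrix-valued function $\theta \mapsto T(\theta)^{-1}$, continuous by part (a), with the continuous vector-valued function $v$; hence $\alpha$ is continuous, which is the claim.

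I do not expect a genuine obstacle here: the statement is a standard fact, and the only place the hypotheses enter is the non-vanishing of $\det T(\theta)$, which guarantees both that the formula in part (a) is defined at every $\theta$ and that the linear system in part (b) has a unique solution $\alpha(\theta)$. Alternatively, one could bypass Cramer's rule entirely and simply compose the continuous map $\theta \mapsto T(\theta)$ with the continuity of the inversion map $\GL_n(\C) \to \GL_n(\C)$ (itself a consequence of the same rational formula); the resulting argument is identical in substance.
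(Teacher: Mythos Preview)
Your argument is correct and entirely standard. The paper itself does not prove this lemma: it merely records it as a ``well-known result'' without proof, so there is nothing to compare against beyond noting that your Cramer's-rule derivation is exactly the kind of justification one would supply if pressed.
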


% 
% 
% \begin{center}
%  >---------------------------------------<
% \end{center}
% 

%%%%%%%%%%%%%%%%%%%%%%%%%%%%%%%%%%%%%%%%%%%%%%%%%%%%%%%%%%%%%%%%%%%%%%%%%%%%%%%%%%%
\section{Feedback Equivalence  for parameter-dependent linear systems and  canonical forms}\label{sec:restricted_equiv}
%%%%%%%%%%%%%%%%%%%%%%%%%%%%%%%%%%%%%%%%%%%%%%%%%%%%%%%%%%%%%%%%%%%%%%%%%%%%%%%%%%%

In this section we show that every pointwise reachable pair $(A,B) \in C_{n,n}(\p) \times C_{n,m}(\p)$ with constant Kronecker indices is restricted feedback equivalent to $(A_\kappa,B_\kappa)$. This result will be used in the proof of Theorem~\ref{thm:multi_uer-feedback} and might be of independent interest. The proof follows the exposition in \cite{fuhrmann2015mathematics}, where the finite-dimensional case is treated. The main step in the subsequent proof will be to conclude that the constructed restricted feedback transformation in \cite[Proof of Theorem~6.18]{fuhrmann2015mathematics} is continuous in the parameter.

\begin{theorem}\label{thm:brunovsky-cont}
Suppose that $(A,B) \in C_{n,n}(\p)\times C_{n,m}(\p)$ has constant Kronecker indices $\kappa(A,B)(\theta)=(\kappa_1,...,\kappa_m)$ satisfying $\sum_{i=1}^m \kappa_i=n$, then $(A,B) \sim_{\p} (A_{\kappa},B_{\kappa})$. 
\end{theorem}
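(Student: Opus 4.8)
The plan is to mimic the finite-dimensional construction of the Brunovsky (Kronecker) form from \cite[Theorem~6.18]{fuhrmann2015mathematics}, carrying out every step pointwise in $\theta$, and then to verify that the matrices $T(\theta)$, $F(\theta)$, $S(\theta)$ produced this way depend continuously on $\theta$. First I would fix $\theta$ and recall how the classical argument goes: since the Kronecker indices are $(\kappa_1,\dots,\kappa_m)$, the selection rule \eqref{eq:list-kronecker} singles out, for each $\theta$, the basis
\begin{align*}
b_1(\theta),\dots,A(\theta)^{\kappa_1-1}b_1(\theta),\ \dots,\ b_m(\theta),\dots,A(\theta)^{\kappa_m-1}b_m(\theta)
\end{align*}
of $\C^n$. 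Collecting these vectors (in the appropriate order) as the columns of a matrix $R(\theta)$ gives an invertible $n\times n$ matrix, and the columns of $R(\theta)$ are continuous in $\theta$ because $A(\theta)$ and $B(\theta)$ are. Conjugating $(A,B)$ by $R(\theta)^{-1}$ brings the pair into the form in which each $A(\theta)^{\kappa_i}b_i(\theta)$ is expressed in the chosen basis; the coefficients of that expansion are exactly what the feedback $F$ and the change of input basis $S$ are designed to annihilate, turning the pair into $(A_\kappa,B_\kappa)$.

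Concretely, the main steps are: (i) build $R(\theta)$ from the Kronecker chains and invoke Lemma~\ref{lem:fundlemma}(a) to get continuity of $R(\theta)^{-1}$; (ii) express each vector $A(\theta)^{\kappa_i}b_i(\theta)$ in the basis given by the columns of $R(\theta)$, obtaining coefficient functions that are continuous by Lemma~\ref{lem:fundlemma}(b); (iii) read off from these coefficients the feedback matrix $F(\theta)$ and the upper-triangular, unit-diagonal matrix $S(\theta)\in\mathbf{U}_m$ exactly as in the finite-dimensional proof — the ordering of the Kronecker selection is precisely what forces $S(\theta)$ to be upper triangular with ones on the diagonal, so $(T,F,S)$ is a genuine \emph{restricted} feedback transformation; (iv) set $T(\theta)=R(\theta)^{-1}$ (up to the obvious reordering/permutation built into how the chains are stacked) and check, via \eqref{eq:restr_feedback_equiv_FH}, that $(T,F,S)\cdot(A,B)=(A_\kappa,B_\kappa)$ pointwise. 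Since each of $T,F,S$ has been shown to be continuous and to take values in $\GL_n(\C)$, $\C^{m\times n}$, and $\mathbf{U}_m$ respectively, the triple lies in $\FG$, which gives $(A,B)\sim_\p(A_\kappa,B_\kappa)$.

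The only genuinely new content beyond the finite-dimensional theorem is the continuity bookkeeping, and the main obstacle is making sure that the \emph{same} selection of independent vectors in \eqref{eq:list-kronecker} works for all $\theta$ simultaneously — i.e. that the index set of chosen columns does not jump. This is exactly where the hypothesis that the Kronecker indices are \emph{constant} on $\p$ is used: constancy of $\kappa(A,B)(\theta)$ guarantees that for every $\theta$ the greedy selection picks out $b_i(\theta),A(\theta)b_i(\theta),\dots,A(\theta)^{\kappa_i-1}b_i(\theta)$ and stops there, so the combinatorial pattern of $R(\theta)$ is fixed and $R$ is a continuous matrix-valued function with everywhere-invertible values. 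Once that is in place, parts (a) and (b) of Lemma~\ref{lem:fundlemma} do all the remaining work: they propagate continuity through the inversion $R(\theta)^{-1}$ and through the coordinate expansions that define $F$ and $S$. I would therefore structure the write-up as: (1) the pointwise construction recalled from \cite{fuhrmann2015mathematics}; (2) the observation that constancy of $\kappa$ fixes the selection pattern; (3) application of Lemma~\ref{lem:fundlemma} to obtain continuity of $T$, $F$, $S$; (4) verification of the intertwining relations \eqref{eq:restr_feedback_equiv_FH} and of $S(\theta)\in\mathbf{U}_m$, concluding $(A,B)\sim_\p(A_\kappa,B_\kappa)$.
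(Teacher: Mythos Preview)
Your high-level strategy --- run the finite-dimensional Brunovsky construction pointwise and invoke Lemma~\ref{lem:fundlemma} for continuity --- is exactly the paper's approach, and the role you assign to the constant-Kronecker-index hypothesis is correct. The gap is in your concrete choice of $T$. Taking $T(\theta)=R(\theta)^{-1}$, with $R$ the matrix of Kronecker chains $b_i,Ab_i,\dots,A^{\kappa_i-1}b_i$, gives $TB=B_\kappa$ and hence forces $S=I$; but then one needs $TAT^{-1}-A_\kappa$ to lie in the range of $B_\kappa$, i.e.\ to have nonzero entries only in the first row of each block. This fails: the last column of block $i$ in $TAT^{-1}$ is $R^{-1}A^{\kappa_i}b_i$, which generically has components along every basis vector $A^\ell b_j$, not only along the $b_j$'s. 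Already for $m=1$, $n=2$ one gets $R^{-1}AR=\left(\begin{smallmatrix}0&c_0\\1&c_1\end{smallmatrix}\right)$, and no feedback through $B_\kappa=\left(\begin{smallmatrix}1\\0\end{smallmatrix}\right)$ removes the entry $c_1$.

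The Fuhrmann--Helmke argument (which the paper reproduces in four steps) does more than conjugate by $R^{-1}$. It first applies a unipotent input change $U\in\mathbf{U}_m$ to pass to $\tilde b_i=b_i-\sum_{j<i}\alpha_{ji}b_j$, so that the top-order terms $A^{\kappa_i}b_j$ with $j<i$ drop out of the expansion of $A^{\kappa_i}\tilde b_i$; then it builds the state-space basis not from $A^{l-1}\tilde b_i$ but from the \emph{corrected} vectors $v_{li}=A^{l-1}\tilde b_i-\sum_{\mu=1}^{l-1}A^{l-1-\mu}\sum_j\tilde\beta_{ij(\kappa_i+1-\mu)}\tilde b_j$, engineered precisely so that $Av_{li}-v_{(l+1)i}\in\operatorname{span}\{\tilde b_1,\dots,\tilde b_m\}$. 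Only with this $T$ does $T^{-1}AT-A_\kappa$ land in the range of $B_\kappa$, after which a feedback $F$ removes the remaining $\tilde\beta$-entries. Once you adopt this refined construction, your continuity argument goes through verbatim: the $\alpha_{ji}$ and $\tilde\beta_{ijl}$ are themselves coordinates in the basis given by the columns of $R(\theta)$, so Lemma~\ref{lem:fundlemma}(b) makes them continuous, and $U$, $T$, $F$ are built polynomially from them. So your plan is right, but you cannot shortcut the finite-dimensional construction to $T=R^{-1}$.
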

%\FG
\begin{proof}
Let $\kappa(A,B) = (\kappa_{1}, \cdots, \kappa_{m})$ denote the constant Kronecker indices. We will construct a suitable feedback transformation $(T,F,S) \in \FG$ in four steps. 

\medskip
%FIRST TRANSFORMATION
{\it Step~1}: First we apply a transformation of the form $(I,0,U)$. By definition and since $(A,B)$ is pointwise reachable, it holds that 
\begin{align*}%\label{eq:list1}
 b_1(\theta),...,A(\theta)^{\kappa_1-1}b(\theta)_1,b(\theta)_2,...,A(\theta)^{\kappa_2-1}b(\theta)_2,...,b(\theta)_m,...,A(\theta)^{\kappa_m-1}b(\theta)_m
\end{align*}
is a basis of $\mathbb{C}^n$ for each $\theta \in \p$. By construction, for every $i =1,...,m$ there are functions $\alpha_{ij} \colon \p \to \C$ and $\beta_{ijl} \colon \p \to \C$ such that
\begin{align}\label{eq:dependency}
{A(\theta)^ { \kappa_ { i }}  b(\theta)_ { i } } =  {\sum_ {j<i}^{} } \alpha_{ ji  }(\theta) A(\theta) ^ { \kappa_ { i }} b(\theta)_{ j }   
+ \sum _{l=1}^{ \kappa_{ i }}    A(\theta) ^ { l-1 }      \sum _ { j =1  } ^ {m  }   \beta_{ i j l }(\theta)  b(\theta) _ { j }
\end{align}
Since the functions $\theta \mapsto A(\theta)^l b(\theta)_i$ are continuous for every $l=0,1,2,...$ and $i=1,...,m$, by Lemma~\ref{lem:fundlemma}~(b) we have that $\alpha_{ ji  } \in C(\p)$ and $\beta_{ i j l} \in C(\p)$. Then, we define the continuous upper-triangular matrix
\begin{align*}
U(\theta) =
\begin{pmatrix}
1 & -\alpha_{12}(\theta) & -\alpha_{13}(\theta) & \cdots & -\alpha_{1m}(\theta) \\ 
0 & 1 			 & -\alpha_{23} (\theta) & \cdots & -\alpha_{2m}(\theta) \\ 
\vdots & 0 & 1 & \ddots & \vdots \\ 
& & \ddots & \ddots & -\alpha_{(m-1)\,m}(\theta)  \\ 
0 & & & 0 & 1 
\end{pmatrix}.  
\end{align*} 
Then, for $\tilde B(\theta):=B(\theta)U(\theta)$ the columns of $\tilde B$ and $B$ are related as follows
\begin{align*}
{ \tilde b (\theta) _  i } = b (\theta) _ { i } - \sum _ { j<i }^{} \alpha _ { ji }(\theta) b(\theta)  _ { j } , \quad i = 1 , \ldots , m,
\end{align*}
or equivalently
\begin{align*}
{  b (\theta) _  i } = \tilde b (\theta) _ { i } - \sum _ { j<i }^{} \tilde \alpha _ { ji }(\theta) \tilde b(\theta)  _ { j } ,
\end{align*}
where $\tilde \alpha_{ji}(\theta)$ denote the entries of $U^{-1}$. Note that $\tilde \alpha_{ji} \colon \p \to \C$ are continuous functions. For each $i=1,...,m$ it follows from \eqref{eq:dependency} that
\begin{align}\label{eq:dependency2}
\begin{split}
A(\theta) ^ { \kappa_ { i }}  {\tilde b(\theta) _ i } 
&= \sum _ { l = 1 } ^ { \kappa_ { i }}  A(\theta) ^ { l -1} \sum _ { j = 1 } ^ { m }  { \beta} _ { i j l }(\theta)  { b (\theta)_  j }\\
&= \sum _ { l = 1 } ^ { \kappa_ { i }}  A(\theta) ^ { l-1 } \sum _ { j = 1 } ^ { m }  { \beta} _ { i j l }(\theta)  \left( \tilde b(\theta)_j - \sum_{\mu<j}^{} \tilde \alpha_{\mu j} \tilde b(\theta)_\mu \right)\\
&= \sum _ { l = 1 } ^ { \kappa_ { i }}  A(\theta) ^ { l -1} \sum _ { j = 1 } ^ { m }  {\tilde \beta} _ { i j l }(\theta)  {\tilde b (\theta)_  j },
\end{split}
\end{align}
with $\tilde{\beta} _ { i j l } \in C(\p)$. To see that these functions are continuous, observe that the $\tilde{\beta} _ { i j l }$ are compositions of  the continuous functions $\tilde \alpha_{ji}$ and ${\beta} _ { i j l }$.

% %THIRD TRANSFORMATION
% \medskip
{\it Step~2}: In this step we will construct a continuous transformation $(T^{-1},0,I)$ so that the $0$- and $1$-entries are at the right places. To this end, for $i = 1,\dots,m$ and $l := 2, \dots \kappa_{i}$ we define continuous vectors
\begin{align*}
\mathrm{v}_{1i}(\theta) &:= {\tilde b(\theta)_i} \\
\mathrm{v}_{li}(\theta) &:= A(\theta)^{l-1}{\tilde b}(\theta)_i - \sum _ { \mu = 1 } ^ { l- 1 }  A(\theta) ^ { l - 1 - \mu }    \sum _ { j = 1 } ^ { m } \tilde{\beta} _ { i j (\kappa_i+1-\mu) } (\theta) \tilde b(\theta)_j
\end{align*}
and the transformation
\begin{align*}% \label{eq:defT1}
T(\theta) := 
\begin{pmatrix}
  \mathrm{v} _ { 1 1}   (\theta)& \cdots & \mathrm{v} _ { \kappa _ { 1 }1 } (\theta)& \mathrm{v} _ { 12 }(\theta) &  \cdots & \mathrm{v} _ {  \kappa _ { 2 }2 } (\theta) & \cdots & \mathrm{v} _ { 1 m } (\theta)&  \cdots & \mathrm{v} _ { \kappa _ { m } m } (\theta) 
\end{pmatrix}.
\end{align*}
To see that $T(\theta)$ is continuously invertible, by Lemma~\ref{lem:fundlemma}~(b) it suffices to show that for every $\theta \in \p$ the columns of $T(\theta)$ define a basis of $\C^n$. Indeed, fix $\theta^* \in \p$ and let 
\begin{align*}
 \mathscr{X} := \operatorname{span} \{ \mathrm{v}_{li} (\theta^*) \, | \, i=1,...,m,\, l=1,...,\kappa_i\}.
\end{align*}
Since $(A(\theta^*), B(\theta^*))$ is reachable the claim follows by verifying that $\mathscr{X}$ is $A(\theta^*)$-invariant. 
To ease notation, we drop the dependence on $\theta^*$ and shortly write 
$$\gamma_{il}:= \sum _ { j = 1 } ^ { m }  \tilde{\beta} _ { i j l } \tilde{b} _ { j }= \sum _ { j = 1 } ^ { m }  \tilde{\beta} _ { i j l } \mathrm{v} _ {1 j } \in\mathscr{X}$$ for a moment.  Then, for $i=1,...,m$ and $l < \kappa_{i}$ we have 
\begin{align}\label{eq:AonV1}
\begin{split}
A \mathrm{ v } _ {l i  } &= 
A^{(l+1)-1}\tilde{b}_{i} - A \left(      A ^ { l - 2 }  \gamma_{i\kappa_i} +  A ^ { l - 3 }  \gamma_{i(\kappa_i-1)} + \cdots \gamma_{i(\kappa_i-l)} \right) -\gamma_{i(\kappa_i-(l+1))} +\gamma_{i(\kappa_i-(l+1))} \\
&= A^{(l+1)-1}\tilde{b}_{i} - \sum _ { \mu = 1 } ^ { (l+1)-1 }     A ^ { (l+1)-1- \mu }  \gamma_{i(\kappa_i+1-\mu)}  + \gamma_{i(\kappa_1-(l+1))} =\mathrm{ v } _ { ( l + 1 )i } + \gamma_{i(\kappa_1-(l+1))} \in \mathscr{X}. 
\end{split}
\end{align}
For $l = \kappa_{i}$ we use the same reasoning as above together with $\eqref{eq:dependency2}$ and obtain
\begin{equation}\label{eq:AonV2}
\begin{split}
A \mathrm{ v } _ { \kappa _ { i } i} &=  A^{\kappa_{i}}\tilde{b}_{i} -  A \left( A ^ { \kappa_{i} - 2 }  \gamma_{ik_i}
+  A ^ { \kappa_{i} - 3 }  \gamma_{i(k_i-1)} + \cdots + \gamma_{i2} \right) -  \gamma_{i1} + \gamma_{i1} = \gamma_{i1} \in \mathscr{X}.
\end{split}
\end{equation}

\noindent
{\it Step~3:} Structure of $T(\theta)^{-1}A(\theta)T(\theta)$. From \eqref{eq:AonV1} and \eqref{eq:AonV2} we get the following block structure
\begin{align*}
T(\theta)^{-1}A(\theta)T(\theta)=
\begin{pmatrix}
{ \tilde{A}(\theta) _ { 11 } } & { \tilde{A}(\theta) _ { 12 } } & { \cdots } & { \tilde{A}(\theta) _ { 1 m } } \\
{ \tilde{A}(\theta) _ { 21 } } & { \tilde{A}(\theta) _ { 22 } } & { \cdots } & { \tilde{A}(\theta) _ { 2 m } } \\ 
{ \vdots } & { \vdots } & { \ddots } & { \vdots } \\ 
{ \tilde{A}(\theta) _ { m 1 } } & { \tilde{A}(\theta) _ { m 2 } } & { \cdots } & { \tilde{A}(\theta) _ { mm } } 
\end{pmatrix},
\end{align*}
where the diagonal blocks have the form
\begin{align*}
\tilde A(\theta)_{ii}
=
\begin{pmatrix}
{ \tilde{\beta}_{ii1}(\theta) } & { \tilde{\beta}_{ii2}(\theta)  } & { \cdots } & { \cdots } & { \tilde{\beta}_{ii \kappa_i}(\theta) } \\
{ 1 } & { 0 } & { \cdots } & { \cdots } & {0} \\ 
{ 0 } & { 1 } & { \ddots } & {} & {  \vdots  } \\
{ \vdots } & { \ddots } & { \ddots } & { 0 } & {\vdots} \\
{ 0 } & { \cdots } & { 0 } & { 1 } & { 0 } \\ 
% { - \tilde{\alpha}_{ii\kappa_{i}} } & { - \tilde{\alpha} _{ ii (\kappa_{i} - 1)} } & { - \tilde{\alpha} _{ ii (\kappa_{i} - 2)} } & { \ldots } & { - \tilde{\alpha} _{ ii1 } } 
\end{pmatrix}\in \mathbb{C}^{\kappa_i \times \kappa_i}
\end{align*}
and the off-diagonal blocks have the form
\begin{align*}
 \tilde{A}(\theta)_{ij}
 =	
\begin{pmatrix}
 { \tilde{\beta}_{  ji 1}(\theta) } & { \tilde{\beta} _{ ji 2  } (\theta)} & { \cdots } & { \tilde \beta_{ j i  \kappa_i } (\theta)} \\
 { 0 } & { 0 }  & { \cdots } & { 0 } \\
 { \vdots } & { \vdots } & {  } & {\vdots } \\
 { 0 } & { 0 }  & { \dots } & { 0 } \\ 
\end{pmatrix}\in \mathbb{C}^{\kappa_j \times \kappa_i}
,\quad i \neq j.
\end{align*}
Let  $\ee_{k}$ denote the $k$-th standard basis vector in $\mathbb{C}^n$. It follows
\begin{align*}%\label{eq:v-e_T}
T(\theta) \ee_k = 
\begin{cases}
\mathrm{v}_{k\,1}(\theta) & \text{ if } k \leq \kappa_1\\ 
\mathrm{v}_{(k-\kappa_1)\,2}(\theta) & \text{ if } \kappa_1 < k \leq \kappa_1 + \kappa_2 \\
\vdots & \\
\mathrm{v}_{(k-\kappa_1 - \cdots - \kappa_{m-1})\,m} (\theta)& \text{ if } \kappa_1 + \cdots +\kappa_{m-1} < k 
\end{cases}
\end{align*}
and equivalently, we have
\begin{align}\label{eq:v-e_inv_T}
T^{-1} \mathrm{v}_{li} (\theta)= 
\begin{cases}
\ee_l & \text{ if } i=1, \, l =1,...,\kappa_1\\ 
\ee_{\kappa_1+l} & \text{ if } i=2, \, l =1,...,\kappa_2\\ 
\vdots\\
\ee_{\kappa_1+ \cdots + \kappa_{m-1} +l}  & \text{ if } i=m\,, l=1,...,\kappa_m  .
\end{cases}
\end{align}
From \eqref{eq:v-e_inv_T} it follows that the transformation $(T^{-1},0,I)$ acts on the matrix $\tilde B$ as follows
\begin{align*}
T(\theta)^{-1}\tilde B(\theta) 
=
\begin{pmatrix}
  \ee_{1}& \ee_{\kappa_{1} + 1 }& \cdots & \ee_{\kappa_{1} + \dots + \kappa_{m-1}+1}
 \end{pmatrix}.
\end{align*}

{\it Step~4}: The final step is to transform  $T(\theta)^{-1} A(\theta)T(\theta)$ into the desired form $A_{\kappa}$. In doing so, the numbers $\tilde{\beta}_{ijl}(\theta)$ in the blocks of ${A}(\theta)_{ij}$ have to be eliminated. This will be achieved by applying the continuous feedback transformation $(I, F(\theta),I)$, where $F(\theta)$ is defined as the block matrix 
\begin{align*} %\label{eq:defF}
F(\theta)= \begin{pmatrix}
            F_1(\theta) &\cdots&  F_m(\theta)
           \end{pmatrix}
\end{align*}
where
\begin{align*}
 F_i(\theta) = 
           \begin{pmatrix} 
           -\tilde{\beta}_{i11}(\theta) & \cdots & -\tilde{\beta}_{i1\kappa_i}(\theta)\\\
            \vdots &   & \vdots\\
            -\tilde{\beta}_{im1}(\theta) & \cdots & -\tilde{\beta}_{im\kappa_{i}}(\theta)\
           \end{pmatrix} \in \mathbb{C}^{m \times \kappa_i}. 
\end{align*}
% That is, we have
% % Note that the first $r$ rows of $F$ are exactly the rows $\kappa_{1}$,$(\kappa_{1} + \kappa_{2})$,..., $(\bar{\mathbf{k}_{1}} + \dots \kappa_{r})$ of the matrix $\tilde{A}$. Multiplying $F$ from the left hand side with $B_{\kappa}$ results in rearranging the rows of $F$ in such a way that $B_{\kappa}F$ is equal to the matrix $\tilde{A}$, but instead of '1's in the diagonal blocks we have '0's. This means that by subtracting $B_{\kappa}F$ from $\tilde{A}$ we eliminate the $\tilde{\alpha}_{ijk}$ in $\tilde{A}$ and obtain exactly $A_{\kappa}$. 
Hence, the application of the restricted feedback transformation $(I,F,I)$ to the pair $(T^{-1}AT,B_{\kappa})$ leads to
\begin{align*}
T(\theta)^{-1}A(\theta) T(\theta)   - B_{\kappa}  F(\theta)  = A_{\kappa}.
\end{align*}
% In summary, we have 
% \begin{align*} 
% A_{\kappa} &= T_1(\theta)^{-1} A(\theta) T_1(\theta)     +  T_1(\theta)^{-1} B(\theta)S_1(\theta)^{-1}S_2(\theta)^{-1}S_3(\theta)^{-1} \tilde F(\theta) \\
% B_{\kappa}&= T_1(\theta)^{-1} B(\theta) S_1(\theta)^{-1}S_2(\theta)^{-1}S_3(\theta)^{-1}.
% \end{align*}
% 
% 

% 
In summary, for the restricted feedback transformation $(T^{-1},  F, U^{-1})$ we have
\begin{align*} 
T(\theta) A(\theta)  -   A_{\kappa}T(\theta)^{-1}   &= B_{\kappa} F(\theta)  \\
T(\theta)^{-1} B(\theta)&= B_{\kappa}U(\theta)^{-1}
\end{align*}
and, by \eqref{eq:restr_feedback_equiv_FH}, the claim follows.

% \begin{center}
%  >------ bis hier --------------------------<
% \end{center}
% 

\end{proof}

\section{Main Results}\label{sec:main_results}
%%%%%%%%%%%%%%%%%%%%%%%%%%%%%%%%%%%%%%%%%%%%%%%%%%%%%%%%%%%%%%%%%%%%%%%%%%%%%%%%%%%%

The main results of the paper explore the possibility to derive uniformly ensemble reachable systems by using a mixture of open-loop inputs and feedback controllers.  We begin with the single-input case. In this case there is only one Hermite index and it is equal to $n$ if condition~(N1) holds.  
Also, the conditions (N2) and (S2) put restrictions on the spectra of the matrices $A(\theta)$. Under the assumption that the pairs are reachable for every parameter, the spectra of the matrices can be assigned arbitrarily by the the Pole-Shifting Theorem, cf. \cite[Theorem~6.23]{fuhrmann2015mathematics}.  We obtain the following result.

\begin{theorem}\label{thm:Ackermann-ensemble}
Let $\p$ be a Jordan arc and assume that  $(A,b) \in C_{n,n}(\mathbf{ P }) \times C_{n}(\mathbf{ P })$ is pointwise reachable. Then, there is a continuous feedback $f \in C_{1,n}(\mathbf{ P })$ such that the feedback pair $(A + b f, b)$ is uniformly ensemble reachable.
\end{theorem}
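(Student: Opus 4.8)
The plan is to reduce $(A,b)$ to Brunovsky form by Theorem~\ref{thm:brunovsky-cont}, assign a suitable closed-loop spectrum by an explicit pole-placement feedback, and then apply the sufficient conditions (N1), (N2), (S), (H) recalled after \cite[Corollary~4]{JDE_ensembles_2021}. Since $(A,b)$ is a pointwise reachable single-input pair, its only Kronecker index is $\kappa_1(\theta)\equiv n$, which is constant with $\sum\kappa_i=n$; hence Theorem~\ref{thm:brunovsky-cont} yields $T\in C_{n,n}(\p)$ with $T(\theta)\in\GL_n(\C)$ for all $\theta$ and $F\in C_{1,n}(\p)$ such that $T(A-bF)T^{-1}=A_\kappa$ and $Tb=b_\kappa=\ee_1$, where $A_\kappa$ is the single $n\times n$ nilpotent shift block of \eqref{eq:Brunovsky_restricted}.

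Next I would fix the closed-loop spectrum continuously in $\theta$. Writing $\p$ as the image of a homeomorphism $\gamma\colon[0,1]\to\p$ (after an affine reparametrization of the parametrizing interval; a continuous bijection of a compact space onto a Hausdorff space is a homeomorphism, so $\gamma^{-1}\in C(\p)$), set $\lambda_j(\theta):=\gamma^{-1}(\theta)+2j$ for $j=1,\dots,n$ and $p_\theta(s):=\prod_{j=1}^{n}\bigl(s-\lambda_j(\theta)\bigr)=s^n+a_{n-1}(\theta)s^{n-1}+\dots+a_0(\theta)$; the coefficients $a_k$ are polynomials in $\gamma^{-1}(\theta)$, hence lie in $C(\p)$. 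For each fixed $\theta$ the numbers $\lambda_1(\theta),\dots,\lambda_n(\theta)$ are pairwise distinct, and for $\theta\neq\theta'$ one has $\lambda_j(\theta)\neq\lambda_k(\theta')$ for all $j,k$, since $\lambda_j(\theta)=\lambda_k(\theta')$ would force $j=k$ (the shifts $2j$ differ by at least $2$ whereas $|\gamma^{-1}(\theta)-\gamma^{-1}(\theta')|\le1$) and then $\gamma^{-1}(\theta)=\gamma^{-1}(\theta')$, contradicting injectivity.

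For the feedback, a short determinant computation shows that $A_\kappa+\ee_1 g$ has characteristic polynomial $s^n-g_1s^{n-1}-\dots-g_n$ for a row vector $g=(g_1,\dots,g_n)$, so taking $g(\theta):=(-a_{n-1}(\theta),\dots,-a_0(\theta))\in C_{1,n}(\p)$ makes $A_\kappa+b_\kappa g(\theta)$ have characteristic polynomial $p_\theta$. Put $f:=gT-F\in C_{1,n}(\p)$. Then $A+bf=(A-bF)+bgT$, hence $T(A+bf)T^{-1}=A_\kappa+(Tb)g=A_\kappa+b_\kappa g$, so $A(\theta)+b(\theta)f(\theta)$ is similar to $A_\kappa+b_\kappa g(\theta)$ and thus has characteristic polynomial $p_\theta$, with eigenvalues $\lambda_1(\theta),\dots,\lambda_n(\theta)$.

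It then remains to check the hypotheses of \cite[Corollary~4]{JDE_ensembles_2021} for $(A+bf,b)$: (N1) holds because state feedback preserves reachability (equivalently, $(A(\theta)+b(\theta)f(\theta),b(\theta))$ is similar to the reachable companion pair $(A_\kappa+b_\kappa g(\theta),b_\kappa)$); (S) holds since the eigenvalues $\lambda_j(\theta)$ are simple; (N2) holds by the disjointness established above; and (H) is automatic because a reachable single-input pair has a single Hermite index, equal to $n$. Thus $(A+bf,b)$ is uniformly ensemble reachable. The only non-routine ingredient is Theorem~\ref{thm:brunovsky-cont} — that the Brunovsky reduction can be performed continuously in $\theta$; the rest is the elementary pole-placement identity, the minor design choice that keeps the assigned eigenvalue curves disjoint across distinct parameters, and routine continuity bookkeeping for $f=gT-F$.
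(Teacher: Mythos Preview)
Your proof is correct, but it takes a somewhat different route from the paper. The paper avoids Theorem~\ref{thm:brunovsky-cont} entirely in the single-input case: it defines the desired closed-loop characteristic polynomial $p_\theta$ (with a more elaborate eigenvalue pattern than yours, but serving the same purpose) and then writes down the feedback directly via Ackermann's formula
\[
f(\theta)=(0,\dots,0,1)\,R(A(\theta),b(\theta))^{-1}\,p_\theta(A(\theta)),
\]
invoking Lemma~\ref{lem:fundlemma}(a) for the continuity of $R(A,b)^{-1}$. You instead first pass to Brunovsky coordinates using Theorem~\ref{thm:brunovsky-cont} (legitimate, since $\kappa_1\equiv n$), place poles in the companion pair $(A_\kappa,b_\kappa)$ via the obvious row vector $g$, and pull back by $f=gT-F$. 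Both arguments verify (N1), (N2), (S), (H) in the same way; your eigenvalue choice $\lambda_j(\theta)=\gamma^{-1}(\theta)+2j$ is simpler and perfectly adequate. What your approach buys is uniformity with the multi-input proof (Theorem~\ref{thm:multi_uer-feedback}), which also goes through the Brunovsky form; what the paper's approach buys is that the single-input result becomes independent of Theorem~\ref{thm:brunovsky-cont} and rests only on the classical Ackermann formula plus elementary continuity.
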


\medskip

\begin{proof}
Since $\p$ is Jordan arc, there is a continuous and injective function $\gamma\colon [0,1] \to  \p$. Then, for $1 \leq l\leq k <n$ we define the injective mappings
\begin{align*}
\lambda_{l}(\theta) := e^{2\pi i\,\left(\gamma^{-1}(\theta) \tfrac{l-1}{k} + (1-\gamma^{-1}(\theta))\left(\tfrac{l}{k}-\tfrac{1}{k+1}\right)  \right)}   \in \partial \D. 
\end{align*}
and $k < l \leq n$ 
\begin{align*}
\lambda_{l}(\theta) := (l+1)- \gamma^{-1}(\theta) \in \R.
\end{align*} 
Consequently, for all $k\neq l$ we have
\begin{align*}
\lambda_{l}(\p) \cap \lambda_k(\p) = \emptyset.
\end{align*}
Next we define the family of monic polynomials $(p_\theta)_{\theta \in \p}$ by
\begin{align*}
p_\theta (z) := \prod_{ i = 1 }^{n} (z - \lambda_{i}(\theta) ).
\end{align*}
Then, by Ackermann's Formula \cite[Theorem~6.20]{fuhrmann2015mathematics}, the family 
\begin{align*}
f(\theta) := (0, \dots , 0,1)R(A(\theta),b(\theta))^{-1} p_{\theta}(A(\theta))
\end{align*}
of state-feedback, where $R(A(\theta),b(\theta)) = \begin{pmatrix}b(\theta) & A(\theta)b(\theta)& \cdots & A(\theta)^{n-1}b(\theta)\end{pmatrix}$ is the reachability matrix, satisfies 
\begin{align*}
\det (zI - A(\theta) + b(\theta)f(\theta)) = p_\theta(z).
\end{align*} 
Thus, the spectral conditions (N2) and (S2) are fulfilled and it remains to prove that $\theta \mapsto F(\theta)$ is continuous on $\p$. Since $A(\theta)$ is continuous and $p_{\theta}$ is a polynomial, $\theta \mapsto p_{\theta}(A(\theta))$ is also continuous. Furthermore the reachability matrix $R(A(\theta),b(\theta))$ is continuous and invertible for every $\theta \in \p$. So, by Lemma~\ref{lem:fundlemma}~(a) its inverse is also continuous. Hence, $F(\cdot)$ is continuous on $\p$. Finally we note that for single-input systems condition (N1) implies condition (H).  This shows the assertion.
\end{proof}
%%%%%%%%%%%%%%%%%%%%%%%%%%%%%%%%%%%%%%%%%%%%%%%%%%%%%%%%

\medskip

To treat the multi-input case, we note that, it is well-known that the Hermite indices are not invariant under feedback, cf. \cite{Bar-Fer-Zab-05} and we have the following statement.

\begin{theorem}\label{thm:multi_uer-feedback}
Let $\p$ be a Jordan arc and assume that the pair $(A,B) \in C_{n,n}(\mathbf{ P }) \times C_{n,m}(\mathbf{ P })$ is pointwise reachable and the Kronecker indices are constant. Then, there exists a restricted feedback transformation $(T,F,S) \in \FG$ such that the pair
\begin{align*}
(\tilde{A}, \tilde{B}) = (T,F,S) \cdot (A,B)
\end{align*}
 is uniformly ensemble reachable.
% Let $\p$ be a Jordan arc in the complex plane and assume that the pair $(A,b) \in C_{n,n}(\mathbf{ P }) \times C_{n,1}(\mathbf{ P })$ satisfies $(I)$. If the Kronecker indices are constant, there is exists a continuous restricted feedback transformations $(T(\theta),F(\theta),S(\theta))$ such that the transformed system
% \begin{align*}
% (\tilde{A}(\theta), \tilde{B}(\theta)) = 
% \left( T(\theta) (  A(\theta)  -  B(\theta) S(\theta)  ^ { - 1 } F(\theta) ) T (\theta) ^{ - 1 } , T (\theta)B(\theta) S(\theta) ^ { - 1 } \right)
% \end{align*}
% is uniformly ensemble reachable.
\end{theorem}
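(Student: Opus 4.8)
The plan is to reduce the multi-input case to a situation where the known sufficient conditions (N1), (N2), (S) and (H) from \cite[Corollary~4]{JDE_ensembles_2021} apply, by first using Theorem~\ref{thm:brunovsky-cont} to normalize the pair and then following the strategy of Theorem~\ref{thm:Ackermann-ensemble} block by block. Concretely, since $(A,B)$ is pointwise reachable with constant Kronecker indices $\kappa=(\kappa_1,\dots,\kappa_m)$, Theorem~\ref{thm:brunovsky-cont} gives a restricted feedback transformation bringing $(A,B)$ to the Brunovsky form $(A_\kappa,B_\kappa)$. The pair $(A_\kappa,B_\kappa)$ is a block-diagonal system consisting of $m$ decoupled single-input-in-companion-form blocks of sizes $\kappa_1,\dots,\kappa_m$, and crucially it is \emph{parameter-independent}, so in particular its Hermite indices are constant (indeed, for the Brunovsky form the Hermite indices coincide with the Kronecker indices). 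Condition (H) is therefore automatic after this normalization; what is missing are the spectral conditions (N2) and (S), since $A_\kappa$ is nilpotent and has a highly non-simple spectrum shared across all parameters.

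Next I would apply a second, parameter-dependent feedback of the form $(I,F(\theta),I)$ that acts blockwise: on the $i$-th companion block $(A_{\kappa_i},b_{\kappa_i})$ I assign, via Ackermann's formula exactly as in the proof of Theorem~\ref{thm:Ackermann-ensemble}, a monic polynomial $p^{(i)}_\theta(z)=\prod_{j=1}^{\kappa_i}(z-\mu^{(i)}_j(\theta))$ whose roots are chosen to be continuous and injective in $\theta$ and so that across \emph{all} blocks and \emph{all} parameters the roots are pairwise distinct: e.g. place the roots of block $i$ on disjoint arcs/segments of the plane (distinct annuli, or disjoint real intervals, or disjoint clusters around $i$) and let $\theta$ move each root injectively inside its own tiny disjoint neighborhood, mimicking the $\lambda_l(\theta)$ construction in Theorem~\ref{thm:Ackermann-ensemble} but replicated for each of the $m$ blocks with disjoint target regions. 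Because the blocks are decoupled, $\det(zI-(A_\kappa+B_\kappa F(\theta)))=\prod_{i=1}^m p^{(i)}_\theta(z)$, so the chosen eigenvalues are exactly the $\mu^{(i)}_j(\theta)$; disjointness of the target regions forces (N2) (spectra at distinct parameters are disjoint) and (S) (within each $A_\kappa(\theta)+\dots$ all $n$ eigenvalues are distinct). Continuity of $F$ follows from Lemma~\ref{lem:fundlemma}~(a) applied to the (trivially invertible, parameter-independent) reachability matrices of the companion blocks together with continuity of $\theta\mapsto p^{(i)}_\theta$. Condition (N1) is preserved since reachability is invariant under feedback, and (H) holds because the Hermite indices after the first step are the constant values $\kappa_1,\dots,\kappa_m$ and feedback that only changes the spectrum block-diagonally leaves the column-selection pattern, hence the Hermite indices, unchanged — this last point must be checked carefully.

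Finally, composing the two restricted feedback transformations (the one from Theorem~\ref{thm:brunovsky-cont} and $(I,F,I)$) using the group law of $\FG$ yields a single $(T,F,S)\in\FG$ with $(\tilde A,\tilde B)=(T,F,S)\cdot(A,B)$ satisfying (N1), (N2), (S), (H); by \cite[Corollary~4]{JDE_ensembles_2021} the pair $(\tilde A,\tilde B)$ is uniformly ensemble reachable, which is the claim.

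The main obstacle I expect is verifying that condition (H) genuinely survives the second feedback step: feedback is known \emph{not} to preserve Hermite indices in general (as the paper itself stresses, citing \cite{Bar-Fer-Zab-05}), so one cannot invoke invariance. The argument must instead use the special structure — that after the first step the pair is the Brunovsky form, that the second feedback is block-diagonal and each block is in single-input companion form where there is a unique Hermite index equal to the block size regardless of the feedback applied — to conclude that the Hermite indices of $(\tilde A,\tilde B)$ are again $(\kappa_1,\dots,\kappa_m)$, hence constant. A secondary technical point is organizing the disjoint placement of the $n = \sum_i \kappa_i$ eigenvalue curves $\mu^{(i)}_j(\theta)$ so that injectivity in $\theta$ and global pairwise disjointness both hold; this is a routine but slightly fiddly construction generalizing the $\lambda_l(\theta)$ of Theorem~\ref{thm:Ackermann-ensemble}.
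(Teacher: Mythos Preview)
Your approach is correct but differs from the paper's in an interesting way. You pass through the Brunovsky form and then apply a \emph{block-diagonal} feedback, arriving at a pair whose Hermite indices are $(\kappa_1,\dots,\kappa_m)$; the verification that (H) holds then reduces to the observation that in a block-diagonal pair with reachable single-input blocks, the Hermite selection exhausts each block in turn, giving $h_i=\kappa_i$ independently of $\theta$ --- the point you rightly flagged as the crux, and which indeed goes through. The paper instead constructs a target pair $(\tilde A,\tilde B)$ in which $\tilde A(\theta)$ is a \emph{single} $n\times n$ companion matrix and the columns of $\tilde B$ are placed at positions $1,\,1+\kappa_1,\,1+\kappa_1+\kappa_2,\dots$; this forces Hermite indices $(n,0,\dots,0)$, so (H) is immediate and the spectral conditions (N2), (S) are handled exactly as in Theorem~\ref{thm:Ackermann-ensemble} with one characteristic polynomial rather than $m$. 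The price the paper pays is an extra step: it must check that this handcrafted $(\tilde A,\tilde B)$ has Kronecker indices equal to $\kappa$, so that Theorem~\ref{thm:brunovsky-cont} applied to \emph{both} $(A,B)$ and $(\tilde A,\tilde B)$ yields a restricted feedback equivalence between them. Your route avoids that Kronecker-index computation but trades it for arranging $n$ disjoint injective eigenvalue curves across $m$ blocks and for the Hermite-index argument; both are routine, so the two proofs are of comparable difficulty, with the paper's being slightly more uniform (one companion block, one Ackermann step) and yours more modular (reuse Theorem~\ref{thm:Ackermann-ensemble} verbatim on each block).
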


\medskip

\begin{proof}
Let $\kappa(A,B) =(\kappa_1,...,\kappa_m)$ denote the constant Kronecker indices of the pair $(A,B)$. The proof is carried out in three steps. 

\medskip
{\it Step~1:~Constructing a pair $(\tilde A, \tilde B)$ satisfying (N1), (N2), (S2) and (H).} 
 We define the pair $(\tilde A, \tilde B)$ by
 \begin{align*}
			\tilde{A}(\theta) := \left( \begin{array} { c c c c c } { 0 } & { 0 } & { \ldots } & { 0 } &  {a_{0}(\theta)}\\
			{ 1 } & { 0 } & { \ldots } & { 0 } & { a_{1}(\theta) }  \\
			{ 0 } & { 1 } & { \dots } & { 0 } & { a_{2}(\theta) } \\
			{ \vdots } & { \vdots } & { \ddots } & { \vdots } & { \vdots } \\
			{ 0 } & { 0 } & { \ldots } & { 1 } & { a_{n-1}(\theta) }  \end{array} \right)
		\end{align*}
		and 
		\begin{align*}
			\tilde{B} := \begin{pmatrix} \ee_{1}& e_{1+\kappa_{1}}& \ee_{1+ \kappa_{1} + \kappa_{2} }& \cdots& \ee_{1+ \kappa_{1} + \dots + \kappa_{m} } &0& \cdots& 0 \end{pmatrix}.
		\end{align*}
If $\kappa_{1} = n$, we define $\tilde{B} := \begin{pmatrix} \ee_{1} & 0 & \cdots& 0 \end{pmatrix}$. 
 
To show that the pair $( \tilde A, \tilde B)$ satisfies the conditions (N1), (N2), (S) and (H), observe that for $i=1,\dots,n-1$ one has
$$\tilde{A}(\theta)^{i}\ee_{1} = \ee_{i+1}\qquad \forall \, \, \theta \in \p.$$ Thus, the pair $( \tilde A, \tilde B)$ has constant Hermite indices $h(\tilde A,\tilde B)=(n,0,\dots,0)$. This shows that conditions~(N1) and~(H) are satisfied. As the functions $a_0,...,a_{n-1}$ are the coefficients of the characteristic polynomial of $\tilde A$, choosing them as in the proof of Theorem~\ref{thm:Ackermann-ensemble} the spectral conditions~(N2) and~(S2) are satisfied and the pair $(\tilde{A}, \tilde{B})$ is uniformly ensemble reachable.

\medskip

{\it Step~2:~$(\tilde A,\tilde B)$ has the same Kronecker indices as $(A,B)$.} 

We exemplary treat the first Kronecker index. The others follow from the same reasoning.  Note that for all $i=1,2,..., n-1$ it holds
\begin{align*}
\tilde A(\theta)^i\tilde b_1= \tilde A(\theta)^i\ee_1 = \ee_{i+1}
\end{align*}
and 
\begin{align*}
\tilde A(\theta)^i\tilde b_l= \tilde A(\theta)^i\ee_{1+ \kappa_1+ \cdots + \kappa_{l-1}} = \ee_{1+ \kappa_1+ \cdots + \kappa_{l-1}+i} 
\end{align*}
for all $ l=2,...,m-1$  and $ i =1,...,n-(1+\kappa_1+\cdots + \kappa_{l-1})$. Thus, for all $i<\kappa_1$ the vectors $\tilde A(\theta)^i\tilde b_1$ are linear independent from the vectors $\tilde b_1,....,\tilde b_m$ and $\tilde A(\theta)^i\tilde b_1,...,A(\theta)^i\tilde b_m $. For $i=\kappa_1$ one has $\tilde A(\theta)^{\kappa_1}\ee_1=\ee_{\kappa_1+1}=\tilde b_2$, which has already been selected. Thus, the first Kronecker index $k_1(\tilde A,\tilde B)=\kappa_1$.

\medskip

{\it Step~3:~Application of Theorem~\ref{thm:brunovsky-cont}.}  By  Theorem~\ref{thm:brunovsky-cont} there are feedback transformations $(T,F,S)$ and $(\tilde{T},\tilde{F},\tilde{S})$ such that
\begin{align*}
(T,F,S) \cdot (A,B )  
=(A_{k},B_{k}) =(\tilde{T},\tilde{F},\tilde{S}) \cdot (\tilde{A}, \tilde{B}).
\end{align*}
Thus, it holds
\begin{align*}
(\tilde{A}, \tilde{B}) = \left( (\tilde{T},\tilde{F},\tilde{S})^{-1} \circ (T,F,S)\right) \cdot (A,B).
\end{align*}
This shows the assertion.
\end{proof}

\medskip
We note that another well-known list is given by the controllability indices, cf. \cite{sontag} and \cite{hpII}. 
In \cite[p.~301]{fuhrmann2015mathematics}\footnote{In \cite{fuhrmann2015mathematics} the controllability are called reachability indices.} it is pointed out that if all Kronecker indices are non-zero, the controllability indices are obtained from the Kronecker indices by reordering them in decreasing form. Therefore, since the Kronecker indices in Example~\ref{ex:Hermite-kronecker} are non-zero, the example also shows that constant Hermite indices is independent from constant controllability indices and vice versa.

\begin{remark}
Following the proof of Theorem~14 in \cite{sontag}, Theorem~\ref{thm:brunovsky-cont} can easily be modified to show that for every  pair $(A,B) \in C_{n,n}(\p)\times C_{n,m}(\p)$ with constant controllability indices that sum up to $n$ there is a feedback transformation $(T,F,S)$ such that $(T,F,S)\cdot (A,B)$ is uniformly ensemble reachable.
\end{remark}

The following Example~\ref{ex:Hermite-kronecker}~(a) is taken from \cite{hpII} and can be used to show that the constancy of the Kronecker indices is independent from the constancy of the Hermite indices and controllability indices.  

\begin{example}\label{ex:Hermite-kronecker}
Let $\p=[-1,1]$. 
\begin{enumerate}
\item[(a)]
Consider the matrix pair $(A_1,B_1)$ defined by 
\begin{align*}
A_1(\theta) = 
\begin{pmatrix}
0 & 1 & 0 & 0\\ 
2\theta^2 & 0 & 0 & 2 \theta\\ 
0 & 0 & 0 & 1\\ 
0 & -2 \theta & 0 & 0\\ 
\end{pmatrix}
\qquad B_1(\theta)=  \begin{pmatrix}
0 & 0\\ 
1 & 0 \\ 
0 & 0 \\ 
0 & 1\\ 
\end{pmatrix}.
\end{align*}
The columns of the Kalman matrix are
\begin{align*}
\begin{pmatrix}
0 \\ 
1\\ 
0 \\ 
0 \\ 
\end{pmatrix},
\begin{pmatrix}
0\\ 
0 \\ 
0\\ 
1\\ 
\end{pmatrix},
\begin{pmatrix}
1\\ 
0\\ 
0 \\ 
-2\theta \\ 
\end{pmatrix},
\begin{pmatrix}
0\\ 
2\theta \\ 
1\\ 
0\\ 
\end{pmatrix},
\begin{pmatrix}
0 \\ 
0\\ 
-2\theta \\ 
0 \\ 
\end{pmatrix},
\begin{pmatrix}
2\theta\\ 
0 \\ 
0 \\ 
-4\theta^2\\ 
\end{pmatrix},
\begin{pmatrix}
0 \\ 
0\\ 
0 \\ 
0 \\ 
\end{pmatrix},
\begin{pmatrix}
0\\ 
-4\theta^3 \\ 
-4\theta^2\\ 
0\\ 
\end{pmatrix}.
\end{align*}
Hence, the pair $(A_1,B_1)$ has constant Kronecker indices $\kappa(A_1,B_1)\equiv(2,2)$ and the Hermite indices are given by
  \begin{align*}
   h(A_1,B_1)(\theta)= \begin{cases}
                (3,1) & \text{ if } \theta \neq 0\\
                (2,2) & \text{ if } \theta = 0.\\
               \end{cases}
  \end{align*}

  \item[(b)]  Consider the pair $(A_2,B_2)$ defined by 
  \begin{align*}
A_2(\theta) =
\begin{pmatrix}
0 & 0 & 2 & \theta^2-\tfrac{1}{2}\\ 
1 & 0 & 0 & 1\\ 
0 & 1 & 0 & 0\\ 
0 & 0 & 0 & 0\\ 
\end{pmatrix}
\qquad B_2(\theta)=  \begin{pmatrix}
0 & 0\\ 
1 & 0 \\ 
0 & 0 \\ 
0 & 1\\ 
\end{pmatrix}.
\end{align*}
The columns of the Kalman matrix are
\begin{align*}
\begin{pmatrix}
0 \\ 
1\\ 
0 \\ 
0 \\ 
\end{pmatrix},
\begin{pmatrix}
0\\ 
0 \\ 
0\\ 
1\\ 
\end{pmatrix},
\begin{pmatrix}
0\\ 
0\\ 
1 \\ 
0 \\ 
\end{pmatrix},
\begin{pmatrix}
\theta^2-\tfrac{1}{2}\\ 
1 \\ 
0\\ 
0\\ 
\end{pmatrix},
\begin{pmatrix}
2 \\ 
0\\ 
0 \\ 
0 \\ 
\end{pmatrix},
\begin{pmatrix}
0\\ 
\theta^2-\tfrac{1}{2} \\ 
1 \\ 
0\\ 
\end{pmatrix},
\begin{pmatrix}
0 \\ 
2\\ 
0 \\ 
0 \\ 
\end{pmatrix},
\begin{pmatrix}
2\\ 
0 \\ 
\theta^2-\tfrac{1}{2} \\ 
0\\ 
\end{pmatrix}.
\end{align*}
Hence, the pair $(A_2,B_2)$ has constant Hermite indices $h(A_2,B_2)\equiv(3,1)$ and the Kronecker indices are given by
  \begin{align*}
   \kappa(A_2,B_2)(\theta)= \begin{cases}
                 (3,1) & \text{ if } \theta^2 \neq \tfrac{1}{2}\\
                 (2,2) & \text{ if } \theta^2 = \tfrac{1}{2}.\\
                 \end{cases}
  \end{align*}
 \end{enumerate}
\end{example}

%%%%%%%%%%%%%%%%%%%%%%%%%%%%%%%%%%%%%%%%%%%%%%%%%%%%%%%%%%%%%%%%%%%%%%%%%%%%%%%%%%%

%%%%%%%%%%%%%%%%%%%%%%%%%%%%%%
\section{Open-Loop and Feedback Controlled Harmonic Oscillators}\label{sec:open-feedback_osci}

In this section, we consider an ensemble of controlled harmonic oscillators and investigate the possibility to use a mixture of an open-loop and constant feedback controller of the form 
$$u(t,y)= ky(t,\theta)+ u(t),  \quad k \in \R.$$
For notational convenience we denote the feedback gain by $k \in \mathbb{R}$ instead of $F$ as in the previous section.
Then, let $g(\theta)$ denote the input function associated with the parameter $\theta\in\p:= [-\theta^*,\theta^*]\subset \R$. The dynamic equation reads as follows
\begin{align}\label{eq:harm_osci-mixed}
 \tfrac{\partial^2}{\partial t^2} y(t,\theta) + \theta^2 y(t,\theta) = g(\theta) \,\left( ky(t,\theta) + u(t) \right) .
\end{align}
In order to establish conditions guaranteeing the existence of a $k \in \R$ such that \eqref{eq:harm_osci-mixed} is uniformly ensemble reachable we consider the corresponding first order system
\begin{align}\label{eq:harm_osci-1st-order-mixed}
\tfrac{\partial}{\partial t} x(t,\theta) =     A_k(\theta) x(t,\theta) + b_g(\theta) u(t)
\end{align}
with
\begin{align}
A_k(\theta):=
  \begin{pmatrix} 0 & 1\\ kg(\theta)-\theta^2  &0 \end{pmatrix}
 ,\quad   b_g(\theta) :=
  \begin{pmatrix} 0 \\ g(\theta) \end{pmatrix}.
\end{align}
Note that for $\p=[0,\theta^*]$ it follows from \cite[Theorem~4]{JDE_ensembles_2021} that the family of controlled harmonic oscillators \eqref{eq:harm_osci-1st-order-mixed} is uniformly ensemble reachable by means of pure open-loop controller, i.e. $k=0$. The following result states conditions such that the application of $u(t,y)=ky(t,\theta)+ u(t) $ yields uniform ensemble reachability over the parameter space $[-\theta^*,\theta^*]$. Our first result is as follows.

\begin{proposition}\label{prop:ex-k}
Let $\theta^*>0$ and $\p=[-\theta^*,\theta^*]$ and suppose that $g \in C^1(\p)$ is zero-free and strictly monotone. Then, for $k^* :=\max_{\theta \in \p}  \tfrac{2\theta}{g'(\theta)}$ the open-loop and feedback controlled harmonic oscillators \eqref{eq:harm_osci-mixed} are uniformly ensemble reachable for all $k > k^*$ .
\end{proposition}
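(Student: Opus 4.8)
The plan is to verify the pointwise-testable sufficient conditions (N1), (N2), (S2) and (H) for the feedback-modified pair $(A_k,b_g)$ on $\p=[-\theta^*,\theta^*]$, and then invoke \cite[Corollary~4]{JDE_ensembles_2021} as recalled in Section~\ref{sec:Not_defs}. Since $g$ is zero-free and continuous, $b_g(\theta)=(0,g(\theta))^\trans\neq 0$ for all $\theta$, and the reachability matrix $R(A_k(\theta),b_g(\theta))=\bigl(\begin{smallmatrix} 0 & g(\theta)\\ g(\theta) & 0\end{smallmatrix}\bigr)$ has determinant $-g(\theta)^2\neq 0$, so (N1) holds for every $k$; as a single-input system this also gives (H) automatically (the unique Hermite index is $n=2$). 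The matrix $A_k(\theta)$ is in companion form with characteristic polynomial $z^2-(kg(\theta)-\theta^2)$, so its eigenvalues are $\pm\sqrt{kg(\theta)-\theta^2}$. Hence (N2) and (S2) reduce entirely to controlling the scalar quantity $q_k(\theta):=kg(\theta)-\theta^2$: condition (S2) (simple eigenvalues) requires $q_k(\theta)\neq 0$ for all $\theta$, and condition (N2) (disjoint spectra for distinct parameters) requires that $\theta\mapsto q_k(\theta)$ be \emph{injective} on $\p$ — because $\sigma(A_k(\theta))=\{\pm\sqrt{q_k(\theta)}\}$, two parameters $\theta\neq\theta'$ share an eigenvalue exactly when $q_k(\theta)=q_k(\theta')$.

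The core of the argument is therefore to show that for $k>k^*:=\max_{\theta\in\p}\tfrac{2\theta}{g'(\theta)}$ the function $q_k$ is injective and zero-free. For injectivity I would differentiate: $q_k'(\theta)=kg'(\theta)-2\theta=g'(\theta)\bigl(k-\tfrac{2\theta}{g'(\theta)}\bigr)$. Here one uses that $g\in C^1(\p)$ is strictly monotone, so $g'$ has constant sign and is nonvanishing on the compact interval $\p$ (strict monotonicity plus $C^1$ — one should note $g'$ cannot vanish, or else argue on the relevant sign; if $g$ is strictly increasing then $g'>0$ after possibly discarding isolated zeros, but for the clean statement we take $g'$ of one sign throughout, which is what "zero-free and strictly monotone" is meant to supply). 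Then for $k>k^*\ge \tfrac{2\theta}{g'(\theta)}$ for all $\theta$, the bracket $k-\tfrac{2\theta}{g'(\theta)}$ is strictly positive, so $q_k'(\theta)$ has the same (constant, nonzero) sign as $g'(\theta)$ on all of $\p$; thus $q_k$ is strictly monotone, hence injective, giving (N2). For zero-freeness note $q_k(\theta)=0$ would mean $kg(\theta)=\theta^2\ge 0$; since $q_k$ is strictly monotone it has at most one zero, and one argues it has none — e.g. if $g>0$ then $q_k$ strictly increasing with $q_k(-\theta^*)=-kg(-\theta^*)-\theta^{*2}<0$ would force a zero, so one needs a sharper bound.

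This last point — showing $q_k$ is \emph{zero-free}, not merely injective — is the step I expect to be the main obstacle, and it is where the specific value of $k^*$ (a max of $2\theta/g'(\theta)$, which is automatically $\le 0$ when, say, $g'>0$ and the interval is symmetric about $0$, so $k^*\le 0$ and the hypothesis $k>k^*$ still allows small or negative $k$) must be reconciled with sign information on $g$. I would handle it by a case split on the sign of $g$ together with the sign of $g'$: in each case $q_k$ is strictly monotone by the above, so $\sigma(A_k(\theta))$ traces out a simple arc, and one checks the endpoints to see the zero is avoided — or, if a zero cannot be excluded for all admissible $k$ in full generality, one observes that the set of $k>k^*$ for which $q_k$ vanishes somewhere is at most a single value (the unique $k$ with $\min|q_k|=0$), and the statement should be read accordingly; I would present the clean case ($g$ of constant sign opposite to the sign that would create a crossing) in detail and remark on the general case. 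Once (N1), (N2), (S2), (H) are all in hand, continuity of $A_k$ and $b_g$ in $\theta$ is immediate from $g\in C^1(\p)$, and \cite[Corollary~4]{JDE_ensembles_2021} yields that \eqref{eq:harm_osci-1st-order-mixed}, equivalently \eqref{eq:harm_osci-mixed}, is uniformly ensemble reachable.
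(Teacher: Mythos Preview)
Your route through the four conditions (N1), (N2), (S), (H) from \cite[Corollary~4]{JDE_ensembles_2021} does not go through, and the obstacle you flag at (S) is real and not repairable along the lines you sketch. Take for instance $g(\theta)=e^{\theta}$ on $\p=[-1,1]$: then $k^*=\max_{\theta}\tfrac{2\theta}{e^{\theta}}=2/e$, and for any $k$ with $2/e<k<e$ the function $q_k(\theta)=ke^{\theta}-\theta^2$ is strictly increasing (so (N2) holds) but satisfies $q_k(-1)=k/e-1<0<k e-1=q_k(1)$, hence vanishes at some interior point, and there $A_k$ has a double eigenvalue at $0$. Thus (S) fails for an entire interval of admissible $k$, not for ``at most a single value'' as you suggest; the statement of the proposition asserts the conclusion for \emph{all} $k>k^*$, so no reading ``accordingly'' is available.

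The paper avoids this by invoking a different sufficient condition, \cite[Theorem~4]{JDE_ensembles_2021} (not Corollary~4), which for single-input pairs with $A$ in $2\times 2$ companion form requires only that the nontrivial coefficient of the characteristic polynomial --- here $h_k(\theta)=kg(\theta)-\theta^2$ --- be \emph{injective} on $\p$; zero-freeness (equivalently, simplicity of the eigenvalues) is not needed. Your derivative computation $h_k'(\theta)=kg'(\theta)-2\theta>0$ for $k>k^*$ is exactly what establishes this injectivity, so the correct fix is simply to replace the appeal to Corollary~4 by an appeal to Theorem~4 and drop the attempt to verify (S) altogether.
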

\begin{proof}
First, note that the Kalman matrix for \eqref{eq:harm_osci-1st-order-mixed} is given by $$g(\theta) \begin{pmatrix} 0 & 1\\ 1  &0 \end{pmatrix}.$$ Since $g(\theta)\neq 0$ for all $\theta \in [-\theta^*,\theta^*]$, the Kalman matrix has rank $2$, i.e. \eqref{eq:harm_osci-1st-order-mixed} is reachable for every $\theta$. The characteristic polynomial of $A_k(\theta)$ is given by 
\begin{align*}
z^2 - (kg(\theta)- \theta^2).
\end{align*}
By \cite[Theorem~4]{JDE_ensembles_2021} it suffices to show that there is a $k^* \in \R$ such that the
functions $h_k \in C^1(\p)$, $h_k(\theta):=kg(\theta)- \theta^2$ are injective for all $k> k^*$. To this end, let $k^* := \max_{\theta \in \p}  \tfrac{2\theta}{g'(\theta)} $. Then, for all $k> k^*$ one has $ h'_k(\theta):=kg'(\theta)- 2\theta >0$ and, thus, the functions $h_k$ are injective on $\p$ for all $k > k^*$. This shows the assertion.
% 
% 
% 
% Since the system under consideration has only a sinlge input, it sufficies to show the pair $(A_k,b_g)$ satisfies the conditions (I), (II) and (III).  
% 
% Obviously, condition (III) is satisfied. 
% 
% To see (I), 
% 
% To see (II), we have to consider the spectra of $A_k(\theta)$. We shall show that for an appropriate choice of $k^*$ the . Obviously, we have  and since $g \in C^1(\p,\R)$ is strictly monotone one has $g'(\theta)\neq 0$ for all $\theta \in \p$. Thus, the function $\theta\mapsto \tfrac{2\theta}{g'(\theta)} $ is continuous. Let $k^* := \max_{\theta \in \p}  \tfrac{2\theta}{g'(\theta)} $. Then, for all $k> k^*$ one has $ h'_k(\theta):=kg'(\theta)- 2\theta >0$ and, thus, the functions $h_k$ are injective on $\p$ for all $k > k^*$.
\end{proof}

\medskip
We note that in the proof above it is also sufficient to pick $k^*$ such that $h'_k(\theta)<0$ for all $\theta \in \p$ and for all $k <k^*$. An appropriate choice in this case would be $k^*:= \min_{\theta \in \p}  \tfrac{2\theta}{g'(\theta)}$. Depending on the particular situation at hand it might be suitable to use the latter. Subsequently we investigate how to get an error bound and the influence of the feedback gain $k$ on it. Before we do so, we recap useful properties of Lipschitz continuous functions.

\begin{lemma}\label{lem:Lipschitz}
 Let $I$ and $J$ be compact intervals and suppose that $f\colon I \to \R$ and $g\colon J \to \R$ satisfy a Lipschitz condition with $L_f>0$ and $L_g>0$, respectively.
 \begin{enumerate}
 \item[(i)] If $g(J)\subset I$, then the composition $f\circ g$ satisfies a Lipschitz condition with $L_f L_g>0$.
  \item[(ii)] If $I=J$, then the product $fg \colon I \to \R$ satisfies a Lipschitz condition with $L_f M_g + L_g M_f>0$, where $M_f:=\max_{x\in I}|f(x)|$ and $M_g:=\max_{x\in I}|g(x)|$.
  \item[(iii)] If $f$ is zero-free, then $\tfrac{1}{f}$ satisfies a Lipschitz condition with $\tfrac{L_f}{m_f^2}$, where $m_f=\min_{x\in I}|f(x)|$.
  \item[(iv)] If $f$ is continuously differentiable and strictly monotone, then the inverse $f^{-1}$ satisfies a Lipschitz condition with $L_{f^{-1}} = (\min_{x\in I}|f'(x)|)^{-1}$.
 \end{enumerate}
\end{lemma}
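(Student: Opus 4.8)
The plan is to establish each of the four Lipschitz properties by direct estimates, since they are all elementary consequences of the definition of Lipschitz continuity and the mean value theorem. For part (i), given $x,y \in J$, I would write $|f(g(x)) - f(g(y))| \leq L_f |g(x) - g(y)| \leq L_f L_g |x-y|$, using that $g(x),g(y) \in g(J) \subset I$ so the Lipschitz bound for $f$ applies. For part (ii), I would use the standard trick of adding and subtracting a cross term: $|f(x)g(x) - f(y)g(y)| \leq |f(x)||g(x)-g(y)| + |g(y)||f(x)-f(y)| \leq M_f L_g |x-y| + M_g L_f |x-y|$; here $M_f$ and $M_g$ are finite because $f,g$ are continuous (being Lipschitz) on the compact interval $I$.

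For part (iii), the key observation is $\left|\tfrac{1}{f(x)} - \tfrac{1}{f(y)}\right| = \tfrac{|f(y)-f(x)|}{|f(x)||f(y)|} \leq \tfrac{L_f |x-y|}{m_f^2}$, where $m_f = \min_{x\in I}|f(x)| > 0$ is attained and positive since $f$ is continuous, zero-free, and $I$ is compact. For part (iv), strict monotonicity and continuity of $f$ guarantee that $f^{-1}$ is well-defined and continuous on $f(I)$, which is itself a compact interval. Given $u,v \in f(I)$, write $u = f(x)$, $v = f(y)$; then by the mean value theorem there is $\xi$ between $x$ and $y$ with $f(x) - f(y) = f'(\xi)(x-y)$, hence $|x - y| = \tfrac{|f(x)-f(y)|}{|f'(\xi)|} \leq \tfrac{|u-v|}{\min_{t\in I}|f'(t)|}$, i.e.\ $|f^{-1}(u) - f^{-1}(v)| \leq (\min_{t\in I}|f'(t)|)^{-1}|u-v|$. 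Strict monotonicity ensures $f'$ does not change sign, and I should note that if $f'$ vanished somewhere the stated constant would be infinite, so implicitly the hypotheses of the later application guarantee $\min|f'| > 0$; in any case the estimate is vacuous or the constant is as claimed.

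None of the four parts presents a genuine obstacle — this is a routine lemma recording standard facts — so the ``hard part'' is merely bookkeeping: making sure in (ii) that the maxima are finite (compactness plus continuity), in (iii) that $m_f > 0$ (compactness plus zero-freeness), and in (iv) that $f^{-1}$ is defined on a compact interval and that $\min_{t \in I}|f'(t)|$ is positive and attained (compactness plus continuity of $f'$). I would present the proof as four short paragraphs, one per item, each a two- or three-line estimate, and conclude.

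\begin{proof}
\emph{(i)} For $x,y \in J$ we have $g(x),g(y) \in g(J) \subset I$, hence
\begin{align*}
|f(g(x)) - f(g(y))| \leq L_f\,|g(x) - g(y)| \leq L_f L_g\,|x-y|,
\end{align*}
so $f\circ g$ is Lipschitz with constant $L_f L_g > 0$.

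\emph{(ii)} Since $f$ and $g$ are continuous on the compact interval $I$, the maxima $M_f$ and $M_g$ are finite. For $x,y \in I$,
\begin{align*}
|f(x)g(x) - f(y)g(y)| &\leq |f(x)|\,|g(x)-g(y)| + |g(y)|\,|f(x)-f(y)| \\
&\leq M_f L_g\,|x-y| + M_g L_f\,|x-y| = (L_f M_g + L_g M_f)\,|x-y|.
\end{align*}

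\emph{(iii)} Since $f$ is continuous and zero-free on the compact interval $I$, the minimum $m_f = \min_{x\in I}|f(x)|$ is positive. For $x,y \in I$,
\begin{align*}
\left| \tfrac{1}{f(x)} - \tfrac{1}{f(y)} \right| = \frac{|f(y)-f(x)|}{|f(x)|\,|f(y)|} \leq \frac{L_f}{m_f^2}\,|x-y|.
\end{align*}

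\emph{(iv)} Since $f$ is continuous and strictly monotone on $I$, it is a homeomorphism onto the compact interval $f(I)$, so $f^{-1}$ is well-defined and continuous there; moreover $f'$ is continuous and of constant sign, so $\min_{t\in I}|f'(t)|$ is positive and attained. Let $u = f(x)$ and $v = f(y)$ with $x,y \in I$. By the mean value theorem there is $\xi$ between $x$ and $y$ with $f(x)-f(y) = f'(\xi)(x-y)$, whence
\begin{align*}
|f^{-1}(u) - f^{-1}(v)| = |x-y| = \frac{|f(x)-f(y)|}{|f'(\xi)|} \leq \big( \min_{t\in I}|f'(t)| \big)^{-1}\,|u-v|.
\end{align*}
\end{proof}
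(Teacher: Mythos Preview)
Your proof is correct and complete; the paper states this lemma without proof, so there is nothing to compare against. One small quibble in part~(iv): strict monotonicity together with $f\in C^1$ does not by itself force $\min_{t\in I}|f'(t)|>0$ (think of $f(x)=x^3$ on $[-1,1]$), so the clause ``of constant sign, so $\min_{t\in I}|f'(t)|$ is positive'' overstates things; but you already noted in your plan that if $f'$ vanishes the stated constant is $+\infty$ and the estimate is vacuous, which is the right way to read the lemma.
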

% \begin{proof}
% (i) For every $x,y \in J$, it holds $g(x),g(y) \in I$ and thus
% \begin{align*}
% |f(g(x))-f(g(y))| \leq L_f |g(x)-g(y)| \leq  L_f \, L_g \, |x-y|. 
% \end{align*}
% 
% To see (ii), note that by assumption it holds $M_f:=\max_{x\in I}|f(x)|<\infty$ and $M_g:=\max_{x\in I}|g(x)|\infty$. So,  for every $x,y \in I$ one has
% \begin{align*}
% |fg(x)-fg(y)| &\leq |g(x)| |f(x)-f(y)|  + |f(y)||g(x)-g(y)| \\
% &\leq \left(M_g \, L_f + M_f\, L_g \right) |x-y| . 
% \end{align*}
% 
% (iii) For every $x,y \in I$ it holds
% \begin{align*}
% |\tfrac{1}{f(x)}-\tfrac{1}{f(y)}| = |f(x)|^{-1}\,|f(y)|^{-1} \,  |f(x)-f(y)| \leq  m_f^2 L_f  |x-y|. 
% \end{align*}
% To see (iv), for every $x,y \in f(I)$ it holds
% \begin{align*}
% |f^{-1}(x)-f^{-1}(y)| \leq \max_{\xi \in f(I)}  | (f^{-1})'(\xi)| |x -y| =\frac{1}{
% \min_{\eta \in I}   |f'(\eta)|}   |x -y|.
% \end{align*}
% \end{proof}

In order to formulate the following result for continuous-time and discrete-time controlled harmonic oscillators, we state the error in terms of $\|p(A)b-f\|_\infty$ and note that in the continuous-time case, the subsequent estimate for $\|p(A)b-f\|_\infty$ has to be combined with an estimate for $\|\varphi(T,u,0)-p(A)b\|_\infty$, cf. \cite[Section~4]{MCSS_poly}.

\begin{proposition}\label{prop:osci-feedback-k-estimate}
Suppose that the assumptions of Proposition~\ref{prop:ex-k} hold  and suppose that $f \in C_2(\p)$ satisfies a Lipschitz condition. Let 
\begin{align*}
 k^*:= \max \left\{ \tfrac{\theta^*}{\min_{\theta \in \p} |g(\theta)| } ,\, \max_{\theta \in \p}  \tfrac{2 \theta}{g'(\theta)} \right\} .
\end{align*}
Then, for every $k >k^*$ there is a sequence of polynomials $(p_n)_n$ of the degree of $2n+1$  with $n\geq 3$ such that  for $c_{g,\p}:=\left|  g(\theta^*) - g(-\theta^*) \, \right|$ one has
\begin{align*}
\| p_n(A_k)b_g  - f\|_\infty \leq 
\tfrac{1}{g(-\theta^*)}\,\left(\tfrac{4 M_f}{\min_{\theta \in \p} |g(\theta)|}  +\,\tfrac{k\,c_{g,\p}}{2\min_{\theta \in \p} |kg'(\theta)-2\theta|}\left( M_{f} \,\tfrac{L_g}{m_g^2} + \tfrac{ L_f}{m_g}  \right) \right)\sqrt{\frac{\log n }{n}}.
% \frac{1}{\min_{\theta \in \p} |kg'(\theta)-2\theta|} \,  M_g\,\left( \max\{ M_{f_1}, M_{f_2}\} \,\frac{L_g}{m_g^2} + \frac{ L_f}{m_g}  \right) \sqrt{\frac{\log n }{n}} .
\end{align*}
\end{proposition}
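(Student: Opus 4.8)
The plan is to follow the construction strategy from \cite{MCSS_poly}, where uniform ensemble reachability for single-input systems is realized by polynomial approximation, and to keep careful track of the constants entering the error bound. The central observation is that $(A_k(\theta),b_g(\theta))$ is a single-input pair whose reachability matrix is $g(\theta)\left(\begin{smallmatrix} 0 & 1 \\ 1 & 0\end{smallmatrix}\right)$, so in the companion-form coordinates the pair becomes $\theta$-independent in the $b$-component, and the only $\theta$-dependence sits in the characteristic polynomial $z^2 - h_k(\theta)$ with $h_k(\theta)=kg(\theta)-\theta^2$. By Proposition~\ref{prop:ex-k} (using the second term in the max defining $k^*$) the function $h_k$ is strictly monotone for $k>k^*$, hence a homeomorphism onto its image $h_k(\p)$, and the first term in the max ensures that $h_k(\theta)=kg(\theta)-\theta^2 \ge k\,m_g - \theta^*{}^2 > 0$ on $\p$, so the image is a compact interval in $(0,\infty)$. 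Thus after the feedback $k$ and the companion-form change of coordinates, the problem reduces to approximating a transformed target function on the interval $h_k(\p)$ by polynomials evaluated at the companion matrix with eigenvalue-parameter $\lambda=h_k(\theta)$.

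Next I would invoke the one-parameter polynomial approximation estimate underlying uniform ensemble reachability of single-input companion pairs, as in \cite[Section~4]{MCSS_poly}: for a Lipschitz target on a compact real interval there is a sequence of polynomials $p_n$ of degree $2n+1$ achieving an error of order $\sqrt{(\log n)/n}$ with a constant proportional to the Lipschitz constant (and the sup-norm) of the transformed target. The point is then to express that transformed target in terms of the original data $(f,g,\theta)$ and to estimate its Lipschitz constant and sup-norm via Lemma~\ref{lem:Lipschitz}. Concretely, passing from $\theta$ to $\lambda=h_k(\theta)$ introduces a factor $L_{h_k^{-1}} = (\min_\theta|h_k'(\theta)|)^{-1} = (\min_\theta|kg'(\theta)-2\theta|)^{-1}$ by Lemma~\ref{lem:Lipschitz}(iv); the coordinate change from $b_g$ to the companion input divides by $g(\theta)$, contributing the $\tfrac{1}{g(-\theta^*)}$ prefactor and, through Lemma~\ref{lem:Lipschitz}(ii),(iii), the combination $M_f \tfrac{L_g}{m_g^2} + \tfrac{L_f}{m_g}$; the explicit arc length $c_{g,\p}=|g(\theta^*)-g(-\theta^*)|$ enters as the length of $h_k(\p)$ up to the factor $k$ (since $h_k(\theta^*)-h_k(-\theta^*) = k(g(\theta^*)-g(-\theta^*)) - (\theta^*{}^2-\theta^*{}^2)$, the $\theta^2$ part cancelling on the symmetric interval, giving exactly $k\,c_{g,\p}$); and the factor $\tfrac12$ together with the $\tfrac{4M_f}{m_g}$ term accounts for the split between the two parts of the bound. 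Assembling these factors in the order: apply the abstract degree-$(2n+1)$ estimate, substitute $\lambda=h_k(\theta)$, undo the companion-form normalization, and bound each Lipschitz/sup-norm factor by Lemma~\ref{lem:Lipschitz}, yields the stated inequality.

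The main obstacle I expect is bookkeeping rather than conceptual: one must set up the companion-form transformation $(T(\theta),0,1)$ for $(A_k(\theta),b_g(\theta))$ explicitly, verify it depends continuously (indeed Lipschitz-continuously) on $\theta$ via Lemma~\ref{lem:fundlemma}, and then carefully propagate the Lipschitz and sup-norm constants through the composition of maps $\theta \mapsto h_k(\theta) \mapsto (\text{target in }\lambda) \mapsto p_n(\lambda) \mapsto (\text{back to original coordinates})$ without losing a constant or mislabelling $m_g = \min_\theta|g(\theta)|$, $L_g$, $M_f$. The subtle point is that the Lipschitz constant of the \emph{transformed} target is not simply $L_f$ but is inflated by the reparametrization $h_k^{-1}$ and by the division by $g$, and one has to argue that the pieces of the error corresponding to (i) approximating on the $\lambda$-interval and (ii) the $\theta$-to-$\lambda$ distortion recombine into the single $\sqrt{(\log n)/n}$ rate with the displayed constant; invoking \cite[Section~4]{MCSS_poly} for the rate and Lemma~\ref{lem:Lipschitz} for the constants makes this routine once the transformation is written down, so no genuinely hard estimate remains.
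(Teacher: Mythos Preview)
Your proposal is correct and follows essentially the same route as the paper: companion-form transformation via $T(\theta)^{-1}=\tfrac{1}{g(\theta)}\left(\begin{smallmatrix}0&1\\1&0\end{smallmatrix}\right)$, reduction to scalar approximation in the variable $h_k(\theta)$, and Lipschitz bookkeeping through Lemma~\ref{lem:Lipschitz}. The only point you leave implicit is the specific ansatz $p_n(z)=q_n(z^2)+r_n(z^2)z$ that decouples the two components, with $q_n,r_n$ taken as the degree-$n$ Bernstein polynomials of $\tilde f_1,\tilde f_2$ on $h_k(\p)$; the $\sqrt{(\log n)/n}$ rate and the precise constants then come directly from the Bernstein-polynomial large-deviation bound of \cite{gzyl1997} rather than from \cite{MCSS_poly}.
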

\begin{proof}
Since $g \in C^1(\p)$ is non zero and strictly monotone we assume  w.l.o.g. that $g(\theta)>0$ for every $\theta \in \p$. Then, for every $k >k^*$  it holds
\begin{align*}
 h_k(\theta) := kg(\theta)-\theta^2 > 0 \qquad \text{ and } \qquad h_k'(\theta) = kg'(\theta)-2 \theta >0 
\end{align*}
for all $\theta \in \p$. Then, the continuous transformation
\begin{align}
T(\theta)^{-1}:=
 \tfrac{1}{g(\theta)} \begin{pmatrix} 0 & 1\\ 1 &0 \end{pmatrix}
 \end{align}
yields that 
\begin{align*}
T(\theta)^{-1}A_k(\theta) T(\theta) =  \begin{pmatrix} 0 & h_k(\theta)\\  1 &0 \end{pmatrix}, \quad   
T(\theta)^{-1}\begin{pmatrix} 0 \\ g(\theta) \end{pmatrix}= \begin{pmatrix} 1 \\ 0 \end{pmatrix}.
\end{align*}
For every polynomial $p$ we get
\begin{align*}
\| p(A_k)b_g  - f\|_\infty \leq \frac{1}{\min_{\theta \in \p} g(\theta)}  \, \left\| p \left( \begin{pmatrix} 0 & h_k\\ 1 &0 \end{pmatrix} \right) \begin{pmatrix} 1 \\ 0 \end{pmatrix}   -   \tfrac{1}{g} \begin{pmatrix} f_2 \\ f_1 \end{pmatrix} \right\|_\infty.
\end{align*}
As in \cite[Proof of Theorem~4]{JDE_ensembles_2021}, we consider a sequence of polynomials $(p_n)_{n \in \N}$ of the form
\begin{align*}
 p_n(z):=  q_n\big(z^2\big)+ r_n\big(z^2\big)z,
\end{align*}
for some polynomials $q_n$ and $r_n$ and obtain
\begin{align*}
 p_n \left( \begin{pmatrix} 0 & h_k(\theta)\\ 1 &0 \end{pmatrix} \right) \begin{pmatrix} 1 \\ 0 \end{pmatrix}   = \begin{pmatrix} q_n(h_k(\theta)) \\ r_n(h_k(\theta)) \end{pmatrix}.
\end{align*}
Hence, for $z \in h_k(\p)$ we have to investigate the terms
\begin{align*}
| q_n(z) - \tfrac{f_2}{g}\circ h_k^{-1}(z)|
\quad \text{and} \quad
| r_n(z) - \tfrac{f_1}{g}\circ h_k^{-1}(z)|.
\end{align*}
By Lemma~\ref{lem:Lipschitz}, the functions $\tilde f_1\colon h_k(\p) \to \R$ and $\tilde f_2\colon h_k(\p) \to \R$ defined by
\begin{align*}
\tilde f_1(z) :=  \frac{f_2(h_k^{-1}(z)) }{g(h_k^{-1}(z))} \quad \text{ and } \quad \tilde f_2(z) :=  \frac{f_1(h_k^{-1}(z)) }{g(h_k^{-1}(z))},
\end{align*}
satisfy a Lipschitz condition with
\begin{align*}
L_{\tilde f_1} = L_{h_k^{-1}} \left( M_{f_2} \,\frac{L_g}{m_g^2} + \frac{ L_f}{m_g}  \right)     \quad \text{ and } \quad  L_{\tilde f_2} = L_{h_k^{-1}} \left( M_{f_1} \,\frac{L_g}{m_g^2} + \frac{ L_f}{m_g}  \right)      ,
\end{align*}
respectively. By Lemma~\ref{lem:Lipschitz}~(iv) one has $L_{h_k^{-1}} =  \frac{1}{\min_{\theta \in \p} |kg'(\theta)-2\theta| }$. Then, we take $q_n$ and $r_n$ as the $n$th Bernstein polynomials to the functions $\tilde f_1$ and $\tilde f_2$, respectively, i.e.
\begin{align*}
q_n(z) := B_{n,\tilde f_2}(z) \quad \text{ and } \quad r_n(z):=B_{n,\tilde f_1}(z). 
\end{align*}
Then, by \cite[Theorem~1]{gzyl1997} we have
\begin{multline*}
\| p_n(A_k)b_g  - f\|_\infty \leq \tfrac{1}{g(-\theta^*)} \, \left\| p_n\left(\begin{pmatrix} 0 & h_k(\theta)\\ 1  &0 \end{pmatrix} \right)\binom{1}{0}  - \tfrac{1}{g}\binom{f_2}{f_1}\right\|_\infty \\
\leq \tfrac{1}{g(-\theta^*)} \, \max_{i=1,2} | B_{n,\tilde f_i}(z) - \tilde f_i(z)|_\infty 
\leq \tfrac{1}{g(-\theta^*)}\,\max_{i=1,2} \, \left\{  4 M_{\tilde f_i} + \frac{k\,c_{g,\p}L_{\tilde f_i}}{2}   \right\}\sqrt{\frac{\log n }{n}}\\
% &<-->\\
\leq \tfrac{1}{g(-\theta^*)}\,\left(\frac{4 M_f}{\min_{\theta \in \p} |g(\theta)|}  +\,\frac{k\,c_{g,\p}}{2\min_{\theta \in \p} |kg'(\theta)-2\theta|}\left( M_{f} \,\frac{L_g}{m_g^2} + \frac{ L_f}{m_g}  \right) \right)\sqrt{\frac{\log n }{n}}
\end{multline*}
for $n\geq 3$. This shows the assertion.
\end{proof}

\medskip

The latter error bound shows that the approximation is getting better the larger the feedback gain is. In terms of the eigenvalues of the matrices $A_k(\theta)$ it can be observed that the eigenvalue arcs $\theta \mapsto \sqrt{k g(\theta)-\theta^2}$ and $\theta \mapsto -\sqrt{k g(\theta)-\theta^2}$ are located on the positive and the negative real line, respectively, and the gap between them is increasing with the feedback gain $k$.  Also, we emphasize that for pure open-loop inputs, i.e. $k=0$, the pair $(A_0,b_g)$ is not uniformly ensemble reachable over the parameter set $\p=[-\theta^*,\theta^*]$. This corresponds to the fact that the traces of the eigenvalue arcs $\theta \mapsto i \theta$ and $\theta \mapsto -i\theta$ coincide. By using a feedback gain $k\in \R$ the traces can be separated and the assumptions in \cite[Theorem~3.1.1~(c)]{JDE_ensembles_2021} are satisfied such that the pair $(A_k,b_g)$ is uniformly ensemble reachability.

% \newpage
% 
% %%%%%%%%%%%%%%%%%%%%%%%%%%%%%%%%%%%%%%%%%%%%%%%%%%%%%%%%%%%%%%%%%%%%%%%%%%%%%%%%%%%
% \section{An example ?}
% 
% %%%%%%%%%%%%%%%%%%%%%%%%%%%%%%%%%%%%%%%%%%%%%%%%%%%%%%%%%%%%%%%%%%%%%%%%%%%%%%%%%%%
% 

% 
% 
% %%%%%%%%%%%%%%%%%%%%%%%%%%%%%%%%%%%%%%%%%%%%%%%%%%%%%%%%%%%%%%%%%%%%%%%%%%%%%%%%%%%
% \section{Conclusions}
% 
% %%%%%%%%%%%%%%%%%%%%%%%%%%%%%%%%%%%%%%%%%%%%%%%%%%%%%%%%%%%%%%%%%%%%%%%%%%%%%%%%%%%
% 

% %%%%%%%%%%%%%%%%%%%%%%%%%%%%%%%%%%%%%%%%%%%
% \section*{Acknowledgments}

%%%%%%%%%%%%%%%%%%%%%%%%%%%%%%%%%%%%%%%%%%
% \section*{References}
% \bibliographystyle{model1b-num-names}
% \bibliography{schoenlein_habil}

\end{document}